\newtheorem{lemma}{Lemma}[section]
\newtheorem{theorem}[lemma]{Theorem}
\newtheorem{proposition}[lemma]{Proposition}
\newtheorem{remark}[lemma]{Remark}
\newtheorem{definition}[lemma]{Definition}
\numberwithin{equation}{section}
\renewcommand{\bar}{\overline}
\newcommand{\h}{\hbar}
\newcommand{\Op}[1]{{\rm Op}^w_\hbar\left(#1\right)}
\title{Long time growth of  Sobolev norms in time dependent semiclassical  anharmonic oscillators 
}
\author{
E. Haus
\footnote{Dipartimento di Matematica e Fisica, Universit\`a degli Studi Roma Tre, Largo San Leonardo Murialdo, 00146, Roma, Italy
\newline
{\em Email: {\tt ehaus@mat.uniroma3.it}}},
 A. Maspero
 \footnote{
International School for Advanced Studies (SISSA), Via Bonomea 265, 34136, Trieste, Italy \newline
 \textit{Email: } \texttt{alberto.maspero@sissa.it} 
 }
}
\begin{document}

\maketitle

\begin{abstract}
We consider the semiclassical   Schr\"odinger equation on $\R^d$ given by   $$\im \h \partial_t \psi = \left(-\frac{\h^2}{2} \Delta  + W_l(x) \right)\psi + V(t,x)\psi ,
$$
 where $W_l$ is an anharmonic trapping of the form  $W_l(x)= \frac{1}{2l}\sum_{j=1}^d x_j^{2l}$, $l\geq 2$ is an integer and $\h$ is a semiclassical small parameter.
We construct a smooth potential $V(t,x)$, bounded in time with its derivatives,  and an initial datum such  that the Sobolev norms of the solution grow at a logarithmic speed for all times of order  $\log^{\frac12}(\h^{-1})$.
The proof relies on two ingredients: first we construct an unbounded  solution to a forced mechanical anharmonic oscillator, then we exploit 
semiclassical approximation with coherent states to obtain growth of Sobolev norms for the quantum system  which are valid for semiclassical time scales.
\end{abstract}

\section{Introduction and statement}
In this paper we  consider the semiclassical Schr\"odinger equation on $\R^d$, $d \geq 1$,  given by 
\begin{equation}
\label{sls}
\im \h \partial_t \psi = \left(-\frac{\h^2}{2} \Delta  + W_l(x) \right)\psi + V(t,x) \,  \psi \ , \qquad x\in \R^d \ ,
\end{equation}
where $W_l(x)$ is the anharmonic trapping  potential
$$
W_l(x) := \frac{1}{2l}\sum_{j=1}^d x_j^{2l} , \qquad l \in \N, \ \ \  l \geq 2 ,
$$
and   $\h \in (0,1]$ is a semiclassical parameter.
  We construct a time dependent perturbation 
\begin{equation}
\label{V}
V(t,x) := \beta(t)x_1  , 
\end{equation}
  with $\beta:\R \to \R$ smooth  and bounded with its derivatives,  so that \eqref{sls} has a solution  whose Sobolev-like norms grow  at a logarithmic speed
  for all times of order   $\log^{\frac12}(K \h^{-1})$, which is a scale  slightly shorter that the  Ehrenfest time.

The norms we use to measure the solution are the spectral ones associated with the 
anharmonic quantum oscillator
\begin{equation}
\label{aosc}
H_{l} := 1 - \h^2 \Delta  + W_l(x) \ .
\end{equation}
More precisely 
we define the scale of Hilbert spaces $\cH^r \equiv \cH^r(\R^d):= D(H_l^{r/2})$  (domain of $H^{r/2}_l$) for $r \geq 0$,  which we equip with the  Sobolev  norms\footnote{It turns out that such a space is equivalent to the 
space of functions
$$
\left\lbrace u \in L^2(\R^n) \colon \ \ \ 
\norm{(1- \h\Delta)^{r/2} u }_{L^2(\R^d)} + \norm{(1+|x|)^{rl} u}_{L^2(\R^d)} < + \infty \right\rbrace , 
$$
see e.g. \cite{YajimaZhang}.}
\begin{equation}
\label{norm}
\norm{ u}_{r}:=\norm{H_l^{r/2} \, u}_{L^2(\R^d)} < \infty, \qquad \forall r \geq 0.
\end{equation}
The negative spaces $\cH^{-r}$ are defined by duality with the $L^2(\R^d)$ scalar product. 
We also denote  $\cH^{\infty}:= \cap_r \cH^r$.

Our main result is the following one:
\begin{theorem}
\label{thm:main}
There exist $\psi_0 \in \cH^\infty$ and  a  function $\beta \in C^\infty(\R, \R)$ fulfilling
\begin{equation}
\label{beta}
\sup_{t \in \R }\abs{\partial_t^\ell \beta(t)} < + \infty , \  \ \ \ \forall \ell \in \N_0 ,
\end{equation}
such that the following is true. 
Denote by $\psi(t)$ the solution of  equation 
\eqref{sls} with $V(t,x) = \beta(t) x_1$ and initial datum $\psi_0$.
 Fix an arbitrary  $r \in \N$. Then there exist $\h_0, K_1, K_2, K_3 >0$ such that $\forall \h \in (0, \h_0]$ one has 
\begin{equation}
\label{est:main}
\norm{\psi(t)}_r \geq K_1 \,  \Big[\log (2+t)\Big]^{r} 
\end{equation}
for all times 
\begin{equation}
\label{times}
2 \leq t \leq K_2 \Big[\log\left( \frac{K_3}{\h}\right)\Big]^{\frac12} . 
\end{equation}
\end{theorem}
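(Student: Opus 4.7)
The plan follows the two-step strategy announced in the abstract: first construct a bounded smooth forcing $\beta$ together with an unbounded solution of the forced classical anharmonic oscillator, then transfer this growth to the quantum evolution via a semiclassical coherent-state approximation.

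\textbf{Classical construction.} The classical symbol corresponding to \eqref{sls} is $h^{cl}(t,x,\xi) = \tfrac{1}{2}|\xi|^2 + W_l(x) + \beta(t)\, x_1$, which decouples across coordinates: only the $(x_1,\xi_1)$-plane carries nontrivial dynamics, with $\ddot x_1 + x_1^{2l-1} = -\beta(t)$. In action-angle variables $(I,\phi)$ for the unperturbed anharmonic oscillator, a dimensional analysis gives $E(I) \sim I^{2l/(l+1)}$ and frequency $\omega(I) = E'(I) \sim I^{(l-1)/(l+1)}$. I would seek $\beta$ of the form $\beta(t) = A\cos\Phi(t)$ with $\Phi'(t) = \omega(I(t))$, so that $\beta$ remains in resonance with the first Fourier mode of $x_1(\phi)$ and averaging produces a slow drift of $I(t)$. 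The requirement \eqref{beta} that all derivatives of $\beta$ be bounded restricts how fast $\omega(I(t))$ and its derivatives may vary, which self-consistently forces only logarithmic growth of $I$ and yields a trajectory with $|x_1^{cl}(t)| \gtrsim (\log(2+t))^{1/l}$ and $h^{cl}(x^{cl}(t),\xi^{cl}(t)) \gtrsim (\log(2+t))^2$.

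\textbf{Semiclassical propagation and Sobolev growth.} Take $\psi_0$ to be a (semiclassically rescaled) Gaussian coherent state centered at the initial phase-space point of the orbit above. By the standard propagation of coherent states (Hagedorn / Combescure--Robert), $\psi(t)$ stays close in $L^2$ to a Gaussian wavepacket $\psi^{cl}(t)$ centered on the classical trajectory $(x^{cl}(t),\xi^{cl}(t))$, with an approximation error of order $\hbar^{1/2}$ amplified by a quantity driven by the accumulated Hessian $\int_0^t \|D^2_x W_l(x^{cl}(s))\|\,ds$. Along the slow-growth orbit the integrand is of order $(\log s)^{(2l-2)/l}$, so this control is at most $t(\log t)^{(2l-2)/l}$; comparing with $\log(\hbar^{-1})$ produces the time window $t \lesssim \log^{1/2}(K_3/\hbar)$ stated in \eqref{times}. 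Pseudodifferential calculus applied to positive powers of the elliptic operator $H_l$ then yields $\|\psi^{cl}\|_r^{2} = \langle H_l^{r}\rangle_{\psi^{cl}} = h^{cl}(x_0,\xi_0)^{r}\bigl(1 + O(\hbar^{1/2})\bigr)$, and combining this with the classical lower bound on $h^{cl}$ and the $L^2$ approximation $\|\psi(t) - \psi^{cl}(t)\|_{L^2} \ll 1$ on the above time window yields \eqref{est:main}.

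\textbf{Main obstacle.} The heart of the argument is the classical construction: producing a single $\beta \in C^{\infty}_{b}(\mathbb{R},\mathbb{R})$ whose instantaneous frequency self-adapts to the drifting action while all derivatives of $\beta$ remain uniformly bounded. Once this trajectory is secured, the quantum side relies on well-established wavepacket techniques, but quantitative error bounds must be tracked carefully along the specific orbit in order to extract the precise time scale $\log^{1/2}(\hbar^{-1})$ in \eqref{times} rather than a weaker threshold.
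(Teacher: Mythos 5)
Your overall two-step strategy (classical unbounded orbit, then coherent-state transfer) matches the paper's, but your specific mechanism on the classical side would not actually produce a $\beta$ satisfying \eqref{beta}, and the quantum side has a gap in the passage from $L^2$ closeness to the $\cH^r$ lower bound.

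\textbf{The classical construction.} You propose $\beta(t) = A\cos\Phi(t)$ with $\Phi'(t)=\omega(I(t))$, a forcing that stays in resonance with the instantaneous frequency of the anharmonic oscillator. The problem is that for $l\ge 2$ the oscillator is superquadratic, so $\omega(I)\to\infty$ as $I\to\infty$. Along the drifting orbit $\omega(I(t))\sim (\log t)^{(l-1)/l}\to\infty$, and hence $\dot\beta(t)=-A\sin\Phi(t)\,\omega(I(t))$ is unbounded, contradicting \eqref{beta} already at the first derivative; the situation only worsens for higher derivatives, which pick up higher powers of $\omega$. You would have to let the amplitude $A(t)$ decay faster than any power of $\omega(I(t))$, but then it is not clear that averaging still produces the claimed $(\log t)^2$ energy growth, and you give no argument that it does. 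The paper's construction (following Levi--Zehnder) avoids this entirely: it sets $\beta(t)=f(y(t),\dot y(t))$ with $f(y,\dot y)=g_1(y)g_2(\dot y)e^{-\dot y}$, i.e.\ kicks localized in $|y|\le 1$ whose strength decays \emph{exponentially} in the velocity. Since $\dot y\to\infty$ along the escaping orbit, $\beta$ and all its derivatives (via Fa\`a di Bruno) decay, and in fact $\beta(t)\to 0$. This decay of the kicks is precisely why the energy growth is only logarithmic; your proposal has the logic backwards (you claim boundedness of derivatives ``forces'' logarithmic growth, whereas in the paper it is the chosen decay of the kicks that simultaneously gives both).

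\textbf{The quantum transfer.} You conclude from $\|\psi(t)-\psi^{\mathrm{cl}}(t)\|_{L^2}\ll 1$ together with $\|\psi^{\mathrm{cl}}(t)\|_r\gtrsim (\log(2+t))^r$ that $\|\psi(t)\|_r\gtrsim (\log(2+t))^r$. This does not follow: $L^2$ closeness does not control $\cH^r$ norms, and $\|\psi\|_r\ge\|\psi^{\mathrm{cl}}\|_r-\|\psi-\psi^{\mathrm{cl}}\|_r$ requires the \emph{difference} to be small in $\cH^r$. Making this rigorous requires either (i) proving the coherent-state approximation directly in the $\cH^r$ topology, which is what the paper does in Theorem~\ref{thm:rob} (and is stated there as a new ingredient, extending \cite{coro}), and which in turn relies on the a priori growth bound on $\|\cU^\h(t,s)\|_{\cL(\cH^r)}$ from Theorem~\ref{thm:maro}; or (ii) an interpolation argument using an a priori $\cH^{r'}$ bound ($r'>r$) on both $\psi$ and $\psi^{\mathrm{cl}}$, which you also do not establish. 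Either way this is not a one-line deduction from the $L^2$ estimate. (A minor additional point: your claim that comparing $t(\log t)^{(2l-2)/l}$ with $\log(\hbar^{-1})$ ``produces'' the exponent $1/2$ in \eqref{times} is not quite right; that comparison allows $t$ up to almost $\log(\hbar^{-1})$, and the exponent $1/2$ in the paper is simply a convenient, non-optimal choice made in Lemma~\ref{u2} with $\epsilon=1/2$.)
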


While in the last few years there has been lot of activity aiming to obtain  upper bounds on the growth of Sobolev norms \cite{nen, bourgain99, del, maro, BGMR2, Mon17, BertiMaspero}, there are only few results \cite{del2, BGMR1, ma18} which give lower bounds:   Theorem \ref{thm:main} goes in this direction, by exhibiting  a solution of \eqref{sls} whose norms increase for long but finite time. 

{The main difficulty in dealing with equation \eqref{sls} is that very few is known on the spectrum of the unperturbed operator $- \frac{\h^2}{2}\Delta + W_l(x)$. 
In particular we are not aware of any asymptotic expansion of its eigenvalues (a property that plays an important role in \cite{BGMR1, ma18}).}

In order to circumvent this problem, the idea  is to  exploit semiclassical approximation in a way that now we  briefly describe.
Equation \eqref{sls} with $V(t,x) = \beta(t) x_1$ is the quantization of the classical Hamiltonian 
 \begin{equation}
\label{clh}
H(t, q, p) = \frac{|p|^2}{2} + W_l(q) + \beta(t) q_1 , \qquad q, p \in \R^d , 
\end{equation}
whose equations of motion are given by
\begin{equation}
\label{mech}
\ddot{q}_1 + q_1^{2l-1} = \b(t)  ,  \qquad  \  \ddot{q_j} + q_j^{2l-1} = 0  , \ \  \ \forall \,   2 \leq j \leq d .
\end{equation}
We show, modyfing a construction of   Levi and Zehnder \cite{leze}, that it is possible to construct   $\beta\in C^\infty(\R, \R)$ bounded with all its derivatives  and an initial datum $(q_0, p_0) \in \R^{2d}$ such that the solution of \eqref{mech} with such an initial datum is unbounded; actually we show that the energy 
\begin{equation}
\label{def:mechen}
E(q,p) := \frac{|p|^2}{2} + W_l(q)
\end{equation}
along such a solution 
grows at a logarithmic speed as $t \to \infty$. \\
The next step  is to use the theory of semiclassical approximation with coherent states to convert  dynamical information on the mechanical system \eqref{mech} to the quantum system \eqref{sls}.
This is done in two steps. First we construct an approximate solution of \eqref{sls} using  coherent states.
A coherent state is a Gaussian packet which stays localized in the phase space along the trajectory of the mechanical system \eqref{mech} till the Ehrenfest time (see e.g. \cite{ hagedorn85, coro, BGP, BR, DeB}). 
As a consequence of the dynamics of \eqref{sls}, we are able to construct a coherent state which oscillates on longer and longer distances,  provoking a growth of its Sobolev norms. \\
The second step is to show that there exists a  solution of \eqref{sls} which stays close, in the $\cH^r$ topology,  to such   coherent state  for 
all times in  \eqref{times}.
This is done by extending  classical results of semiclassical approximation \cite{hagedorn85, coro} to the $\cH^r$ topology, a result which we think might be interesting in its own.\\


Theorem \ref{thm:main} extends partially to anharmonic oscillators a result of \cite{BGMR1}, which, in case of the quantum harmonic oscillators on $\R^d$, constructs solutions with unbounded path in Sobolev spaces.
More precisely, in \cite{BGMR1} it is proved that all the solutions of equation \eqref{sls} with $l=1$ (namely harmonic oscillators) and with 
\begin{equation}
\label{VBGMR}
V(t,x) := \frac{a}{2} \sin(t) x_1 , \qquad a \neq 0 
\end{equation}
have Sobolev norms growing at a polynomial speed:
\begin{equation}
\label{gBGMR}
\norm{\psi(t)}_r \geq C_r (1+t)^{2r} , \qquad \forall t \gg 1 . 
\end{equation}
Remark that,  in this case,  the growth of Sobolev norms happens  for all initial data, for all times and at a polynomial speed. 
The reason is that for system \eqref{sls} with $l=1$ and $V$ as in \eqref{VBGMR} the classical--semiclassical  correspondence is exact and valid for all times, a property first exploited by Enss and Veseli\'c \cite{enss83}.
This is also the mechanism exploited in \cite{BGMR1}, which ultimately  is based on the fact that    \eqref{mech} with $l = 1$ and  $\beta(t) = \sin t$ is a resonant system, whose solutions are  unbounded (see also \cite{del2, ma18} for different examples
of perturbations provoking growth of Sobolev norms).

In case $l \geq 2$, the classical-semiclassical correspondence is valid only for finite times, and  the speed of  growth of Sobolev norms is logarithmic and not polynomial in time. 
This is in accordance with the known upper bounds; 
in particular, in  dimension $d=1$, it is proved in \cite{BGMR2} that each solution of \eqref{sls}
grows at most subpolynomially in time, 
in the sense that   $\forall \epsilon , r >0$, there exists a constant
$C_{r,\epsilon}>0$ such that each solution of \eqref{sls}
fufills
\begin{equation}
\label{tep}
\norm{\psi(t)}_r \leq C_{r, \epsilon} \, (1+|t|)^\epsilon , \quad \forall |t| \geq 1 .
\end{equation}
If the map $t \mapsto \beta(t)$ is real analytic in time, the subpolynomial bound \eqref{tep} can be improved into a  logarithmic one \cite{maro}:
\begin{equation}
\label{tlog}
\norm{\psi(t)}_r \leq C_{r} \, \left[\log(1+|t|)\right]^{\frac{rl}{l-2}}, \quad \forall |t| \geq 1 .
\end{equation}
Remark that  Theorem \ref{thm:main} almost saturates the upper bound, at least for finite but long time intervals.
We are not aware of any results in which Sobolev norm explosion is achieved for all times.

In our opinion, our approach raises an interesting question: 
to which extent can dynamical properties of mechanical systems be converted into quantum analogous? 
Remark that mechanical systems of the form 
$\ddot q_1 + q_1^{2l-1}  = \beta(t) $
 or similar have been widely studied in the literature,
 and conditions on $\beta(t)$ are known  to guarantee either the boundedness of all solutions, or the  existence of unbounded ones,
 see e.g.  \cite{Littlewood66, leze, AlonsoOrtega98,   KKL, Morris,  DZ1, lale91,  levi91, Yuan98, WangYou16} and reference therein. 
For example if      $t \mapsto\beta(t)$ is periodic or quasi-periodic in time with a Diophantine frequency vector and $d = 1$, 
then each orbit of  \eqref{mech} in bounded \cite{Yuan98}. 

Before closing this introduction let us mention that the construction of unbounded orbits in  {\em nonlinear} Schr\"odinger equations
is an extremely difficult and challeging problem.
A first breakthrough was achieved in \cite{CKSTT}, which constructs solutions of the cubic nonlinear Schr\"odinger equation on $\T^2$ whose Sobolev norms become arbitrary large 
(see also \cite{hani14, guardia14, haus_procesi15, guardia_haus_procesi16, GHHMP}
 for generalizations of this result).
At the moment, existence of unbounded  orbits  has only been proved by G\'erard and Grellier \cite{gerard_grellier} for the cubic 
Szeg\H o equation on $\T$, and by 
Hani, Pausader, Tzvetkov and 
              Visciglia \cite{hani15} for the cubic NLS  on $\R \times \T^2$.

\medskip
\noindent{\bf Acknowledgments.} 
The authors thank D. Robert for many stimulating discussions and D. Bambusi for suggesting some references. 
During the preparation of this work we were partially supported by    Progetto  GNAMPA - INdAM 2018 ``Moti stabili ed instabili in equazioni di tipo Schr\"odinger''.

\section{Semiclassical pseudodifferential operators}
We recall the definition and main properties of  a class of semiclassical pseudodifferential operators adapted to study equation \eqref{sls}; the main reference for this part is   \cite{robook}.
   We start by denoting 
\begin{equation}
            \label{def:k0}
                        \tk_0(x,\xi) := (1+|x|^{2l}+|\xi|^{2})^{\frac{l+1}{2l }} \ , \qquad \forall x, \xi \in \R^{d} .
            \end{equation}     
The function $\tk_0$ is a good weight, in the sense that   there exists $C_l>0$ such that 
\begin{equation}
            \label{prop:k0}
 \tk_0(z+w) \leq C_l \,  \tk_0(z) \, \tk_0(w) , \qquad  \forall z, w \in \R^{2d} ,
            \end{equation}            
            and moreover 
 \begin{equation}
            \label{prop:k1}
\wt c_l(1+ E(x,\xi))^{\frac{l+1}{2l}} \leq   \tk_0(x,\xi) \leq \wt C_l (1+ E(x,\xi))^{\frac{l+1}{2l}} , 
            \end{equation}    
for some constants  $\wt c_l, \wt C_l >0$.
We begin with the following definition.
\begin{definition}
	A smooth function  $a(x, \xi) $ will be called a {\em symbol} in the class $\Sigma^m\equiv \Sigma^m(\R^{2d})$ if   $\forall \alpha, \beta \in \N^d$ there exists  $C_{\alpha \beta}>0$ such that 
	\begin{equation*}
	\abs{\partial_x^\alpha \partial_\xi^\beta a (x, \xi)} \leq  C_{\alpha \beta } \, \tk_0(x, \xi)^m  \ . 
	\end{equation*}
\end{definition}
Remark that we do not ask the derivatives of symbols to gain decay.
With this definition of symbols, one has  
$$x_j \in \Sigma^{\frac{1}{l+1}} , \quad \xi_j \in \Sigma^{\frac{l}{l+1}}, \quad
 |\xi|^{2} + W_l(x) \in \Sigma^{\frac{2l}{l+1}} , 
\quad 
\tk_0(x,\xi) \in \Sigma^{1} . $$
We endow $\Sigma^m$ with the family of semi-norms defined for any $M \in \N_0$ by
\begin{equation}
\label{seminorm}
\wp^m_M(a) := \sum_{|\alpha| + |\beta| \leq M} \ \sup_{x, \xi \in \R^{d}} \, \frac{\abs{\partial_x^\alpha \partial_\xi^\beta a (x, \xi)}}{\tk_0^{m}(x, \xi) }  \ . 
\end{equation}
As we already mentioned, we work with semiclassical operators, thus we  consider also symbols depending on the semiclassical parameter $\h \in (0,1]$. 
\begin{definition}
Let $a^\h$ be a family of symbols  depending smoothly on $\h\in]0, 1]$.  
We say that $a^\h\in \Sigma_u^m$ if $a^\h\in\Sigma^m$  for every $\h\in(0, 1]$ and if 
$$
\sup_{\h\in]0,  1]}\wp_{M}^m(a^\h) <+\infty, \qquad \forall M \in  \N^d_0.
$$
\end{definition}
Abusing notation, for a symbol $a^\h \in \Sigma^m_u$ we will denote by 
$\wp_M^m(a^\h)$ the seminorm \eqref{seminorm} where the supremum is taken also on $\h \in (0, 1]$.
To any symbol function $a^\h \in \Sigma^m_u$ we associate its $\hbar$-Weyl quantization $\Op{a^\h}$ by the rule
\begin{equation}
\Op{a^\h}[ \psi](x) := \frac{1}{(2 \pi\h)^d} \iint_{\R^{2d}}  e^{\frac{\im}{\hbar} ( x-y)\cdot  \xi } \ a^\h\left(\frac{x+y}{2}, \xi \right) \psi(y) \ \di y  \di \xi \ . 
\end{equation}
Sometimes we will write  $a(x, \h D_x)$ to denote the operator $\Op{a}$.

A classical result regards composition of pseudodifferential operators.
\begin{theorem}[Symbolic calculus]
Let $a^\h \in \Sigma^m_u$,  $b^\h \in \Sigma^{m'}_u$ be  symbols. Then  there exists a  symbol $c^\h \in \Sigma_u^{m+m'}$ such that $\Op{a^\h} \circ \Op{b^\h} = \Op{c^\h}$. 
For every $j \in \N$, there exists a positive constant $C$  and an integer $M \geq 1$ (both independent of $a^\h$ and $b^\h$) such that
$$
\wp_{j}^{m+m'} (c^\h) \leq C \, \wp_{M}^m(a^\h) \, \wp_M^{m'}(b^\h)\ .
$$
\end{theorem}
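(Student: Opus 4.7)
The plan is to express $\Op{a^\h}\circ\Op{b^\h}$ as $\Op{c^\h}$ using the Moyal product formula and then bound the symbol $c^\h$ by controlling an oscillatory integral through the submultiplicativity \eqref{prop:k0} of $\tk_0$. A standard computation (performed first on Schwartz symbols, where all manipulations are justified) gives
\begin{equation*}
c^\h(z) = \frac{1}{(\pi\h)^{2d}} \iint_{\R^{2d} \times \R^{2d}} e^{-\frac{2\im}{\h}\sigma(u,v)}\, a^\h(z+u)\, b^\h(z+v)\,\di u\,\di v,
\end{equation*}
where $z=(x,\xi)\in\R^{2d}$ and $\sigma$ denotes the standard symplectic form on $\R^{2d}$.

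Since symbols in $\Sigma^m$ need not decay at infinity, this must be treated as an oscillatory integral and regularized by integration by parts in $(u,v)$. The standard device is to introduce a first-order differential operator $L_\h$, built from $\frac{\im\h}{2}\sigma^{-1}(\nabla_u,\nabla_v)$ together with a cutoff partition of unity separating the regions $|v|\lesssim|u|$ and $|u|\lesssim|v|$, whose formal transpose leaves $e^{-2\im\sigma(u,v)/\h}$ invariant. After $N$ integrations by parts one obtains a finite sum of terms in which at most $N$ derivatives fall on $a^\h$ and $b^\h$, accompanied by the decay factor $(1+|u|+|v|)^{-N}$ and by positive powers of $\h$. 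To estimate $\partial_z^\alpha c^\h(z)$ for $|\alpha|\leq j$, I differentiate under the integral sign, expand via Leibniz, and use \eqref{prop:k0} in the form
\begin{equation*}
\tk_0(z+u)^m\,\tk_0(z+v)^{m'} \leq C\,\tk_0(z)^{m+m'}\,\tk_0(u)^{|m|}\,\tk_0(v)^{|m'|}.
\end{equation*}
Choosing $N$ large enough so that the resulting decay in $(u,v)$ beats both $\tk_0(u)^{|m|}\tk_0(v)^{|m'|}$ and the singular prefactor $(\pi\h)^{-2d}$, one deduces a pointwise bound
\begin{equation*}
|\partial_z^\alpha c^\h(z)| \leq C\,\wp_M^m(a^\h)\,\wp_M^{m'}(b^\h)\,\tk_0(z)^{m+m'}
\end{equation*}
uniformly in $\h\in(0,1]$, for some integer $M=M(j,m,m',d,l)$. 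Taking the supremum in $z$ and $\h$ yields the claimed seminorm estimate.

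The main obstacle is the bookkeeping in the integration-by-parts step: each application of $L_\h$ produces one positive power of $\h$ and at most one extra derivative of $a^\h$ or $b^\h$, so $N$ must be chosen simultaneously large enough to (i) absorb $(\pi\h)^{-2d}$, (ii) produce polynomial decay dominating $\tk_0(u)^{|m|}\tk_0(v)^{|m'|}$, and (iii) render the integral absolutely convergent, while keeping the number of derivatives on $a^\h,b^\h$ bounded by a fixed $M$. A convenient workaround, when the explicit construction of $L_\h$ is cumbersome, is to regularize the symbols by a cutoff $\chi(\epsilon u)\chi(\epsilon v)$, perform the estimate uniformly in $\epsilon$ using that on Schwartz symbols everything converges, and finally let $\epsilon\to 0$; this yields both the identity $\Op{a^\h}\circ\Op{b^\h}=\Op{c^\h}$ and the seminorm inequality.
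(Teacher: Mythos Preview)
The paper does not supply a proof of this theorem: it is stated as a classical result in semiclassical pseudodifferential calculus, with the implicit reference being Robert's monograph \cite{robook}. So there is no ``paper's own proof'' to compare against; your outline is essentially the standard argument one finds in that reference or in similar treatments (H\"ormander, Zworski, Dimassi--Sj\"ostrand).

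Your sketch is correct in spirit. Two remarks on presentation. First, the cleanest way to handle the $\h$-dependence is not to integrate by parts enough times to kill the prefactor $(\pi\h)^{-2d}$, but simply to rescale $u=\sqrt{\h}\,\tilde u$, $v=\sqrt{\h}\,\tilde v$: the Jacobian absorbs the prefactor and the phase becomes $-2\im\sigma(\tilde u,\tilde v)$, independent of $\h$. One then runs the oscillatory-integral argument once, uniformly in $\h\in(0,1]$, and the bound $\tk_0(z+\sqrt{\h}\tilde u)^m\leq C\,\tk_0(z)^m\tk_0(\tilde u)^{|m|}$ (from \eqref{prop:k0} and $\sqrt{\h}\leq 1$) gives exactly the weight control you wrote. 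This avoids the somewhat delicate bookkeeping you flag in your last paragraph. Second, the cutoff you describe should separate a neighbourhood of the origin $\{|u|+|v|\lesssim 1\}$ (where no integration by parts is needed) from its complement (where the first-order operator $L$ is nonsingular); the dichotomy $|u|\lesssim|v|$ versus $|v|\lesssim|u|$ is a secondary refinement used to decide whether to differentiate in $u$ or in $v$.
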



The second result concerns the boundedness of pseudodifferential operators.
\begin{theorem}[Calderon-Vaillancourt]
\label{thm:cv} 
Let $a^\h \in \Sigma^0_u$ be a symbol. Then $\Op{a^\h}$ extends to a linear bounded operator from $L^2(\R^d)$ to itself. 
Moreover there exist  constants $C, N >0$ such that 
\begin{equation}
\sup_{\h \in (0, 1]}\norm{\Op{a^\h}}_{\cL(L^2)} \leq C  \ \wp^0_N(a^\h) \ .
\end{equation}
\end{theorem}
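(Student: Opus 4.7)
Since the weight satisfies $\tk_0^0 \equiv 1$, the class $\Sigma^0_u$ is nothing other than the space of smooth functions on $\R^{2d}$ whose partial derivatives of every order are bounded uniformly in $\h \in (0,1]$, with bounds measured by the seminorms $\wp_M^0(a^\h)$. The plan is to reduce the semiclassical statement to the standard (non-semiclassical) Calderon-Vaillancourt theorem by a unitary rescaling of phase space, and then invoke that classical result.

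First, I would introduce the unitary dilation $U_\h : L^2(\R^d) \to L^2(\R^d)$ defined by $(U_\h \phi)(x) := \h^{-d/4}\phi(x/\sqrt{\h})$. Performing the change of variables $y = \sqrt{\h}\,y'$, $\xi = \sqrt{\h}\,\xi'$ in the oscillatory integral defining $\Op{a^\h}$ conjugates away the semiclassical parameter:
\[
U_\h^{*} \, \Op{a^\h} \, U_\h \;=\; \mathrm{Op}_1^w(\tilde a^\h),
\qquad \tilde a^\h(x,\xi) := a^\h(\sqrt{\h}\,x,\sqrt{\h}\,\xi),
\]
where $\mathrm{Op}_1^w$ denotes the Weyl quantization with $\hbar$ replaced by $1$. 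Because of the chain rule and the fact that $\h^{1/2} \leq 1$, the rescaled symbol satisfies, for every $\alpha,\beta \in \N^d$,
\[
\sup_{\h \in (0,1]}\,\bigl\|\partial_x^\alpha \partial_\xi^\beta \tilde a^\h\bigr\|_{L^\infty(\R^{2d})} \;\leq\; \wp^0_{|\alpha|+|\beta|}(a^\h),
\]
so the $L^\infty$-seminorms of $\tilde a^\h$ up to any finite order are uniformly bounded in $\h$ by the corresponding seminorms of $a^\h$.

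Next I would quote the standard Calderon-Vaillancourt theorem (as presented e.g. in \cite{robook}): a symbol $b \in C^\infty(\R^{2d})$ with $\partial^\gamma b \in L^\infty(\R^{2d})$ for all $|\gamma|\leq N$, where $N$ depends only on $d$, gives via $\mathrm{Op}_1^w$ a bounded operator on $L^2(\R^d)$ with operator norm controlled by a finite sum of these $L^\infty$-norms. Applied to $\tilde a^\h$, together with the unitarity of $U_\h$, this yields
\[
\bigl\|\Op{a^\h}\bigr\|_{\cL(L^2)} \;=\; \bigl\|\mathrm{Op}_1^w(\tilde a^\h)\bigr\|_{\cL(L^2)} \;\leq\; C\,\wp^0_N(a^\h),
\]
uniformly in $\h \in (0,1]$, which is the desired estimate.

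The only substantial input is the classical Calderon-Vaillancourt theorem itself, whose standard proof proceeds via the Cotlar-Stein almost-orthogonality lemma applied to a smooth phase-space partition of unity; I would simply cite it rather than reproduce it. The only point of care in the reduction is to ensure that the rescaled symbol's seminorms are controlled \emph{uniformly} in $\h \in (0,1]$, which uses in an essential way that derivatives in the class $\Sigma^0_u$ are not required to gain any negative power of the weight $\tk_0$.
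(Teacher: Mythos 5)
The paper does not prove this theorem; it simply records it as a classical fact with a reference to \cite{robook}. Your reduction is correct and is the standard route: the unitary dilation you introduce is exactly the operator $\Lambda_\hbar$ the paper defines a few lines later together with the conjugation identity \eqref{dil}, the rescaled symbol $\tilde a^\h(x,\xi)=a^\h(\sqrt\h x,\sqrt\h\xi)$ has all its $L^\infty$-derivative norms bounded by those of $a^\h$ (since $\h\le 1$ only brings in extra factors $\h^{(|\alpha|+|\beta|)/2}\le 1$), and the classical $\hbar=1$ Calderon--Vaillancourt theorem then gives the uniform-in-$\h$ operator bound.
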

An immediate consequence of Calderon-Vaillancourt theorem and symbolic calculus is that if  $a^\h \in \Sigma^m_u$, $m \in \R$, then 
 $\Op{a^\h}$ maps $\cH^{r+\frac{m(l+1)}{l}}$ to $\cH^{r}$ $\, \forall r \in \R$ with the quantitative bound
\begin{equation}
\label{CV2}
\sup_{\h \in (0, 1]} \norm{\Op{a^\h} }_{\cL\left(\cH^{r+\frac{m(l+1)}{l}}, \cH^{r}\right)} \leq C'  \ \wp_{N'}^m(a^\h)  ,
\end{equation}
where $C', N'$ are positive constants.

The next result is  the exact Egorov theorem.
\begin{proposition}[Exact Egorov]
	\label{thm:eq}
	Let $\chi(t, x, \xi)$ be a polynomial function in  $x, \xi$ of degree at most two with smooth $t$-dependent coefficients. Let $U_\chi^\h(t,s)$ be the propagator of  the Schr\"odinger equation $\im \h\dot \psi = \chi(t, x, \h D_x) \psi$.
	 Then for every  $a^\h \in \Sigma^m_u$, one has 
	$$
	U_\chi^\h(t,0)^* \, \Op{a^h} \, U_\chi^\h(t,0) = \Op{a_{t}^\h} ,
	 \qquad
	{a^\h_{t}} := a^\h\circ \phi^{t}_\chi , 
	$$
	where $\phi^t_\chi(x,\xi)$ is  classical  Hamiltonian flow at time $t$ of $\chi(t, x, \xi)$ with initial datum $(x,\xi)$ at time 0.
\end{proposition}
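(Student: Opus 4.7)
The plan is to introduce the two operator-valued families
$A(t) := U_\chi^\h(t,0)^* \Op{a^\h} U_\chi^\h(t,0)$ and $B(t) := \Op{a_t^\h}$, show that both satisfy the same Heisenberg-type equation with the same initial datum, and conclude by uniqueness.  Formal differentiation of $A$ in $t$ gives
\begin{equation*}
\dot A(t) = \frac{\im}{\h}\bigl[\Op{\chi(t,\cdot)}, A(t)\bigr], \qquad A(0) = \Op{a^\h},
\end{equation*}
while symplecticity of $\phi^t_\chi$ combined with the chain rule gives $\frac{d}{dt}(a^\h\circ\phi^t_\chi) = \{\chi, a^\h\circ\phi^t_\chi\}$, and Weyl-quantizing produces
\begin{equation*}
\dot B(t) = \Op{\{\chi(t,\cdot), a_t^\h\}}, \qquad B(0) = \Op{a^\h}.
\end{equation*}

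The key point, and the reason the Egorov theorem is exact here, is the identity
\begin{equation*}
\frac{\im}{\h}\bigl[\Op{\chi}, \Op{b}\bigr] = \Op{\{\chi, b\}}
\end{equation*}
valid for any polynomial $\chi$ of degree at most two and any $b\in\Sigma_u^m$.  This follows from the Moyal formula for the Weyl symbol of a product: beyond the leading Poisson bracket, the asymptotic expansion of the commutator contains terms of the form $\h^{2k} C_{2k+1}(\chi,b)$ for $k\geq 1$ that involve derivatives of $\chi$ of total order $\geq 3$; since $\chi$ is at most quadratic these contributions vanish identically, and the Moyal bracket collapses exactly to the Poisson bracket.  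Inserting this with $b = a_t^\h$ into the expression for $\dot B(t)$ shows that $B$ solves the very same Heisenberg equation as $A$.

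There are two technical points to settle.  First, one has to check that $a_t^\h \in \Sigma_u^m$: because $\chi$ is at most quadratic, the Hamiltonian flow is affine symplectic, $\phi^t_\chi(z) = S(t)z + v(t)$ with $S(t)$ a smooth symplectic matrix and $v(t)\in\R^{2d}$ smooth in $t$, and the chain rule together with \eqref{prop:k0} and linearity of $S(t)$ yields $\wp_M^m(a_t^\h) \leq C(t)\, \wp_M^m(a^\h)$ uniformly in $\h$.  Second, and this is the main obstacle, the differentiation of $A(t)$ and the uniqueness argument must be justified rigorously despite $\Op{\chi}$ being unbounded: one works on the dense subspace $\mathcal{S}(\R^d)$, exploiting that $U_\chi^\h(t,0)$ is the metaplectic propagator of a quadratic Hamiltonian (hence preserves $\mathcal{S}$) and that both $\Op{a^\h}$ and $\Op{a_t^\h}$ map $\mathcal{S}$ into itself.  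The matrix elements $\langle A(t)\varphi, \psi\rangle$ and $\langle B(t)\varphi, \psi\rangle$ are then smooth in $t$ and satisfy the same scalar ODE with the same initial datum for every $\varphi,\psi\in\mathcal{S}$, so they coincide; density and $L^2$-boundedness extend the equality to an operator identity on $L^2(\R^d)$.
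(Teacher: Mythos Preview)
The paper states this proposition without proof, so there is nothing to compare against directly. Your strategy---show that $A(t)$ and $B(t)$ solve the same first-order equation with the same initial datum---is the standard one, and the crucial algebraic input $\frac{\im}{\h}\bigl[\Op{\chi},\Op{b}\bigr]=\Op{\{\chi,b\}}$ for quadratic $\chi$ is exactly right. However, as written the argument is only valid for \emph{autonomous} $\chi$, whereas the proposition (and its use in Lemma~\ref{egorov} with the genuinely time-dependent quadratic $H_2$) requires $t$-dependent coefficients.

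Both of your displayed evolution equations fail in the non-autonomous case. Differentiating $A(t)=U_\chi^\h(t,0)^*\Op{a^\h}U_\chi^\h(t,0)$ gives
\[
\dot A(t)=U_\chi^\h(t,0)^*\,\tfrac{\im}{\h}\bigl[\Op{\chi(t,\cdot)},\Op{a^\h}\bigr]\,U_\chi^\h(t,0),
\]
which coincides with $\frac{\im}{\h}\bigl[\Op{\chi(t,\cdot)},A(t)\bigr]$ only when $U_\chi^\h(t,0)$ commutes with $\Op{\chi(t,\cdot)}$---true if $\chi$ is autonomous, false otherwise. On the classical side, the chain rule gives $\partial_t(a^\h\circ\phi^t_\chi)=\{\chi(t,\cdot),a^\h\}\circ\phi^t_\chi$; symplecticity converts this into $\{\chi(t,\cdot)\circ\phi^t_\chi,\,a^\h_t\}$, and since $\chi(t,\cdot)\circ\phi^t_\chi\neq\chi(t,\cdot)$ for non-autonomous $\chi$, your identity $\partial_t a^\h_t=\{\chi,a^\h_t\}$ does not hold. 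Thus neither $A$ nor $B$ actually satisfies the ODE $\dot X=\frac{\im}{\h}[\Op{\chi},X]$, and the uniqueness step collapses.

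The standard repair is to interpolate in the \emph{initial} time: fix $t$, put
\[
G(s):=U_\chi^\h(s,0)^*\,\Op{a^\h\circ\phi^{t,s}_\chi}\,U_\chi^\h(s,0),\qquad 0\le s\le t,
\]
with $\phi^{t,s}_\chi$ the two-parameter flow from time $s$ to time $t$. Then $G(0)=\Op{a^\h_t}$, $G(t)=U_\chi^\h(t,0)^*\Op{a^\h}U_\chi^\h(t,0)$, and now the two contributions to $\partial_s G(s)$ cancel exactly: the conjugation produces $\frac{\im}{\h}\bigl[\Op{\chi(s,\cdot)},\Op{a^\h\circ\phi^{t,s}_\chi}\bigr]$, while $b(s,\cdot):=a^\h\circ\phi^{t,s}_\chi$ is constant along the characteristics of $\chi$, so $\partial_s b$ is (up to the sign fixed by your Moyal identity) the Poisson bracket of $\chi(s,\cdot)$ with $b$ evaluated at the \emph{base} point---precisely what is needed. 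Your side remarks (affinity of $\phi^t_\chi$ giving $a^\h_t\in\Sigma^m_u$, and the regularization through $\mathcal S(\R^d)$) are correct and carry over unchanged.
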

We denote by $\cT_\hbar(z)$ the Weyl operator
 \begin{equation}
\label{weyl.op}
\left[\cT_\hbar(z)\psi\right](x):= \left[\exp\left(- \frac{\im }{\hbar} (-p \cdot x + q \cdot \hbar D_x)\right)\psi\right](x) 
\end{equation}
remark that $\cT_\hbar(z)$ is the time 1 flow of the 
Schr\"odinger equation  $\im \h\dot \psi = \chi(z; x, \h D_x) \psi$, where
$z:=(p,q)\in \R^d\times $  and 
 $\chi(z; x, \xi):= -p\cdot x + q\cdot \xi$ is a linear Hamiltonian.
 By Proposition \ref{thm:eq}   one gets
\begin{equation}
\label{T.op}
\cT_\hbar(z)^* \, \Op{a} \, \cT_\hbar(z) = \Op{a_z} , 
\qquad a_z(x,\xi) := a(x+q, \xi + p) .
\end{equation}
We will also use the dilation operator
 $$
 \left[\Lambda_\hbar \psi\right](x) := \frac{1}{\hbar^{d/4}} \, \psi\left(\frac{x}{\sqrt \hbar}\right) \ , \qquad \mbox{for } \psi \in L^2(\R^d) ;
 $$
 it is  unitary on $L^2(\R^d)$ and 
 conjugates pseudodifferential operators in the following way:
\begin{equation}
\label{dil}
\Lambda_\hbar^{-1}\, \Op{a} \, \Lambda_\hbar = {\rm Op}^w_1(b) \ , \qquad b(x, \xi):= a\left(\sqrt \hbar x, \sqrt \hbar \xi\right) .
\end{equation}

\section{Semiclassical approximation and coherent states}
Consider the semiclassical  Schr\"odinger equation
\begin{equation}
\label{semi}
\im \hbar\partial_t \psi =  H(t, x, \hbar D_x) \psi , 
\end{equation}
where $ H(t, x, \h D_x) $ is the $\hbar$-Weyl quantization of a real valued Hamiltonian $H(t, x, \xi)$ with $x, \xi \in \R^d$.
Through all the section we will make the following assumptions on both the classical symbol $H(t, x, \xi)$ and its Weyl quantization $ H(t, x, \h D_x)$.

\begin{itemize}
\item[(H$_{\rm cl}$)] $H(t,x, \xi)$  is a $C^\infty$--function in every variable.  There exists $m >0$ such that   $\forall t \in [0, T]$ the function $H(t, \cdot) \in \Sigma^m$.
 Its   Hamiltonian  flow,  namely the solution $(x(t), \xi(t))$ of 
\begin{equation}
\label{ham}
\begin{cases}
 \dot x = \displaystyle{\frac{\partial H}{\partial \xi}(t, x, \xi)} \\
 \dot \xi = \displaystyle{- \frac{\partial H}{\partial x}(t, x, \xi)} 
 \end{cases} ,  \qquad x(0) = x_0, \quad \xi(0) = \xi_0 
\end{equation}
  exists for all $t \in [0, T]$  and any initial datum $(x_0, \xi_0) \in \R^{2d}$.
\item[(H$_{\rm qu}$)] The Schr\"odinger equation 
\eqref{semi} has  a unique propagator $\cU^\h(t,s)$, unitary in  $L^2(\R^d)$ and fulfilling the group property $\cU^\h(t,s) \cU^\h(s, \tau) = \cU^\h(t, \tau)$. 
The propagator $\cU^\h(t,s) $ is bounded as a map from $\cH^r$ to itself  $\forall r $; 
moreover  there exists $\mu>0$ and, for every  $ r > 0$, a constant $C_r >0$ such that 
\begin{equation}
\label{gsn}
\sup_{\h \in (0,1]}\norm{\cU^\h(t,s)}_{\cL(\cH^r)} \leq C_r (1+| t-s |)^{r \mu} .
\end{equation}
\end{itemize}

Remark that, in the case of equation \eqref{sls}, assumption $({\rm H}_{\rm cl})$ is easily checked, while 
assumption $({\rm H}_{\rm qu})$ follows by 
Theorem \ref{thm:maro}, which is a 
  semiclassical version of the abstract theorem of growth proved in \cite{maro}.\\

We will construct an  approximate solution of \eqref{semi}  using coherent states.
Roughly speaking,  a coherent state is  a Gaussian packet concentrated in the phase space around a point $z = (q,p)\in \R^{2d}$. 
The theory of semiclassical approximation states that,   if the initial datum of equation \eqref{semi} is a coherent state concentrated near $z_0 = (q_0, p_0)$,  then the
true solution of \eqref{sls} stays  close, up to the Ehrenfest time, to a coherent state concentrated near  the solution  $z_t = (q(t), p(t))$  of the Hamiltonian equations of  $H(t, q, p)$ with initial datum $z_0$.

To state rigorously this result we need  to introduce some notation.
Define  for $z = (q,p) \in \R^{2d}$ the 
functions
\begin{align}
\label{c1}
&\vf_0(x) := \frac{1}{(\pi \hbar)^{d/4}}\, e^{-\frac{ |x|^2}{2 \hbar}} ,  \quad x \in \R^d , \\
\label{c2}
&\vf_z := \cT_\hbar(z)\vf_0 . 
\end{align}
The function $\vf_z$ is called a {\em coherent state}; it is a Gaussian packet localized in the phase space around  the point  $z \in \R^{2d}$. It is normalized so that   $\norm{\vf_z}_{L^2(\R^d)} = 1$.

Denote by $z_t = (q(t), p(t)) \in \R^{2d}$ the solution of the Hamiltonian equations of   $H(t, q, p)$ with initial datum $z_0 \in \R^{2d}$; let 
$M_t$ be the $2d\times 2d$ Hessian of the Hamiltonian computed at the solution $z_t$, namely
\begin{equation}
\label{def:M}
M_t := \left.\left( \frac{\partial^2 H}{\partial z^2}\right)\right|_{z = z_t} . 
\end{equation}
We use $z_t$ and $M_t$ to define the quadratic Hamiltonian
\begin{equation}
\begin{aligned}
H_2(t, x, \xi)  & := H(t, z_t) + \la x-q(t),  \frac{\partial H}{\partial q}(t, z_t)\ra_{\R^d} + \la \xi - p(t),  \frac{\partial H}{\partial p}(t, z_t)\ra_{\R^d}  \\
&\qquad + \frac{1}{2} \la M_t \begin{pmatrix}x - q(t) \\ \xi -p(t)\end{pmatrix}, \ \begin{pmatrix}x -q(t) \\ \xi - p(t) \end{pmatrix} \ra_{\R^{2d}} ,
\end{aligned}
\end{equation}
which is nothing but the Taylor expansion of order 2 of the Hamiltonian $H(t,q,p)$ around  $z_t$.
Its $\h$-quantization  $H_2(t, x, \h D_x)$ generates a  unitary propagator $\cU_2^\h(t,s)$ in $L^2(\R^d)$.

We denote by  $F_t$ the  solution of
\begin{equation}
\label{Ft}
\dot F_t = J M_t F_t \ , \qquad F_0 = \uno ,
\end{equation}
where $J := \begin{pmatrix} 0 & \uno \\ - \uno & 0  \end{pmatrix}$ is the standard Poisson tensor.
\begin{lemma}
\label{egorov}
Let $a^\h \in \Sigma^m_u$, $m \in \R$. Then
$ \cU_2^\h(t,0)^* \, \Op{a^\h} \, \cU_2^\h(t,0)$ is a pseudodifferential operator with symbol $a_t^\h$ given by
$$
a_t^\h(\zeta) = a^\h\left( z_t + F_t [\zeta - z_0] \right) \ , \quad \zeta = (x, \xi) \in \R^{2d} .
$$
\end{lemma}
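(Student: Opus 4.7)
The plan is to deduce the lemma directly from the exact Egorov theorem (Proposition \ref{thm:eq}), since $H_2(t,\cdot)$ is by construction a polynomial of degree at most two in the phase space variables with smooth time-dependent coefficients. So the only real content is to identify the classical flow $\phi^t_{H_2}$ of $H_2$ and show it equals the affine map $\zeta \mapsto z_t + F_t[\zeta - z_0]$.

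First I would compute the Hamiltonian vector field of $H_2$. Writing $\zeta' = (x,\xi)$, from the explicit form of $H_2$ one reads off
\[
\nabla_{\zeta'} H_2(t,\zeta') \;=\; \nabla H(t,z_t) \;+\; M_t\,(\zeta' - z_t),
\]
so the Hamilton equations for $H_2$ read $\dot \zeta' = J\nabla H(t,z_t) + J M_t (\zeta' - z_t)$, with initial datum $\zeta'(0)=\zeta$.

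Second, I would introduce the deviation $\eta_t := \zeta'(t) - z_t$. Since $z_t$ itself solves $\dot z_t = J \nabla H(t, z_t)$ (assumption $({\rm H}_{\rm cl})$), the inhomogeneous drift cancels and one is left with the linear equation
\[
\dot \eta_t \;=\; J M_t\, \eta_t, \qquad \eta_0 = \zeta - z_0 .
\]
Comparing with the definition \eqref{Ft} of $F_t$, one obtains $\eta_t = F_t(\zeta-z_0)$, hence
\[
\phi^t_{H_2}(\zeta) \;=\; z_t + F_t\bigl(\zeta - z_0\bigr).
\]

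Finally, applying Proposition \ref{thm:eq} to the quadratic Hamiltonian $H_2$ yields $\cU_2^\h(t,0)^*\,\Op{a^\h}\,\cU_2^\h(t,0) = \Op{a^\h \circ \phi^t_{H_2}}$, and substituting the formula for $\phi^t_{H_2}$ gives exactly the claimed $a_t^\h(\zeta) = a^\h(z_t + F_t[\zeta-z_0])$. There is no real obstacle here: the statement is essentially a packaging result, since all analytic work (boundedness of the propagator, invariance of the symbol class, well-posedness of the classical flow) is either trivial for linear flows or already contained in the exact Egorov statement. The only small point worth checking is that the affine flow $\phi^t_{H_2}$ preserves the class $\Sigma^m_u$, which follows from \eqref{prop:k0} since $F_t$ and $z_t$ are $\h$-independent and smooth in $t$, so the seminorms \eqref{seminorm} of $a_t^\h$ are controlled uniformly in $\h$ by those of $a^\h$ (with constants depending on $\|F_t\|$ and $|z_t|$).
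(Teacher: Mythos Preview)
Your proof is correct and follows the same overall strategy as the paper: apply the exact Egorov theorem (Proposition \ref{thm:eq}) to the quadratic Hamiltonian $H_2$, and then identify the classical flow $\phi^t_{H_2}$. The only difference is in how the flow is computed. The paper writes the ODE for $\zeta(t)$, applies Duhamel's formula with fundamental solution $F_t$, and then integrates by parts using $J\nabla H(s,z_s)=\dot z_s$ to obtain the cancellation that yields $\zeta(t)=z_t+F_t[\zeta-z_0]$. Your substitution $\eta_t:=\zeta'(t)-z_t$ achieves the same cancellation in one line, directly reducing to the linear equation $\dot\eta_t=JM_t\eta_t$ solved by $F_t$; this is a cleaner and more transparent computation, though it carries the same content.
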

\begin{proof}
Since $H_2(t, x, \xi)$ is a quadratic polynomial in $x, \xi$, we can apply Proposition \ref{thm:eq} and get  $\cU_2^\h(t,0)^* \, \Op{a^\h} \, \cU_2^\h(t,0) = \Op{a^\h\circ \phi^t_{H_2}}$, 
where $\phi^t_{H_2}$ is the Hamiltonian flow  of $H_2(t, x, \xi)$.  
We compute explicitly such a flow. Thus let  $\zeta(t):= \phi^t_{H_2}(\zeta)$ be the solution of 
$$
\dot \zeta = J M_t \zeta + J \grad H(t,z_t) - J M_t z_t , \qquad \zeta(0) = \zeta \in \R^{2d}. 
$$
By Duhamel's formula we get
\begin{equation}
\label{sss}
\zeta(t) = F_t \zeta + F_t \int_0^t F_{-s} J \grad H(s, z_s)\, \di s -
 F_t \int_0^t F_{-s} JM_s z_s  \, \di s .
\end{equation}
Now use that $z_s$ is a solution of the Hamiltonian equations of $H(s,z)$ to write  $J \grad H(s,z_s) = \frac{d}{ds} z_s $;  integrating by parts we obtain 
\begin{align}
\notag
F_t \int_0^t F_{-s} J \grad H(s,z_s)\, \di s 
&= z_t - F_t z_0 - 
F_t \int_0^t  \left(\frac{d}{ds} F_{-s}\right) z_s \, \di s  \\
\label{ss}
& = z_t - F_t z_0 +
F_t \int_0^t  F_{-s} J M_s z_s \, \di s ,
\end{align}
where in the last inequality we used that
$$
 \frac{d}{ds} F_{-s} =  \frac{d}{ds} F_{s}^{-1} = 
 - F_{s}^{-1} \, \left(\frac{d}{ds} F_{s} \right)\,F_{s}^{-1} = - F_{-s} J M_s .
$$
Inserting \eqref{ss} into \eqref{sss} gives the result.
\end{proof}

Now fix $z_0 \in \R^{2d}$ and consider the solution of \eqref{semi} with initial datum the coherent state $\vf_{z_0}$ defined in \eqref{c2}.
The main result of the section is  that the quantum evolution $\cU^\h(t,0)\vf_{z_0}$ is well approximated by the dynamics of  $\cU_2^\h(t,0)\vf_{z_0}$ in the  topology of  $\cH^r$, $\forall r \geq 0$. 
This extend to higher Sobolev spaces the results of \cite{coro}. 
To state the theorem precisely, let us 
 introduce  for any $T \geq 0$
 the quantities 
$$|F|_T := \sup_{0 \leq t \leq T} |F_t|  , 
\qquad
\cE_T := \sup_{0 \leq t  \leq T} E(z_t)  ,
$$
where $E(z) \equiv E(q,p)$ is the anharmonic energy defined in \eqref{def:mechen}.
\begin{theorem}
\label{thm:rob}
Assume $({\rm H}_{\rm cl})$ and  $({\rm H}_{\rm qu})$.
Fix  $z_0 \in \R^{2d}$,     $r \geq 0$ and $\kappa \in (0,1]$.
Then there exists a constant $\Gamma >0$ such that for any $\h, T >0$ fulfilling 
\begin{equation}
\label{cond}
\sqrt{\hbar} |F|_T \leq \kappa , 
\end{equation}
one has 
\begin{equation}
\label{approx}
\norm{\cU^\h(t, 0) \vf_{z_0} - \cU_2^\h(t,0) \vf_{z_0}}_r \leq \Gamma \,  \hbar^{1/2} \, |	F|_T^{3}(1+T)^{\mu r + 1} \, (1+\cE_T)^{\frac{r}{2}} , \ \ \  \quad \forall 0 \leq t  \leq T .
\end{equation} 
\end{theorem}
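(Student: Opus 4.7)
The natural starting point is Duhamel's formula applied to $w(t):=\cU^\h(t,0)\vf_{z_0}-\cU_2^\h(t,0)\vf_{z_0}$. Since $\im\h\,\partial_t w=H(t,x,\h D_x)\,w+(H-H_2)(t,x,\h D_x)\,\cU_2^\h(t,0)\vf_{z_0}$ and $w(0)=0$, one gets
$$
w(t)=\frac{1}{\im\h}\int_0^t \cU^\h(t,s)\,(H-H_2)(s,x,\h D_x)\,\cU_2^\h(s,0)\vf_{z_0}\,\di s;
$$
hypothesis $({\rm H}_{\rm qu})$ bounds $\|\cU^\h(t,s)\|_{\cL(\cH^r)}\leq C_r(1+|t-s|)^{r\mu}$, which reduces the problem to a pointwise-in-$s$ estimate of the $\cH^r$ norm of $(H-H_2)(s,x,\h D_x)\,\cU_2^\h(s,0)\vf_{z_0}$.

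The key reduction is provided by Lemma \ref{egorov} combined with the cubic vanishing of the Taylor remainder. By construction, $R(s,\zeta):=(H-H_2)(s,\zeta)$ belongs to $\Sigma^m_u$ and vanishes to order $3$ at $\zeta=z_s$, so Lemma \ref{egorov} gives
$$
(H-H_2)(s,x,\h D_x)\,\cU_2^\h(s,0)=\cU_2^\h(s,0)\,\Op{b_s},\qquad b_s(\zeta):=R\bigl(s,\,z_s+F_s(\zeta-z_0)\bigr),
$$
and $b_s$ vanishes to order $3$ at $\zeta=z_0$. Using \eqref{T.op} and \eqref{dil} one rewrites $\Op{b_s}\vf_{z_0}=\cT_\h(z_0)\Lambda_\h\,{\rm Op}^w_1(\tilde b_s)\,g$ with $\tilde b_s(\eta):=R(s,z_s+\sqrt\h F_s\eta)$ and $g:=\Lambda_\h^{-1}\vf_0$ the $\h$-independent standard Gaussian. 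Taylor's formula together with condition \eqref{cond} then factorises $\tilde b_s=\h^{3/2}|F_s|^3\,\tilde r_s$, with every seminorm $\wp^m_N(\tilde r_s)$ uniformly bounded in $\h$ and $s\in[0,T]$ by $\sup_{|\alpha|=3}\wp^m_N(\partial^\alpha H(s,\cdot))$. Pairing the resulting $\h^{3/2}$ with the $\h^{-1}$ from Duhamel supplies the announced $\sqrt\h$ prefactor.

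Promoting this bound to the $\cH^r$ topology requires combining the exact Egorov identity (Proposition \ref{thm:eq}) applied to the quadratic flows generating $\cT_\h(z_0)$, $\Lambda_\h$ and $\cU_2^\h(s,0)$, with Calderon--Vaillancourt in the form \eqref{CV2}. After conjugation, $H_l^{r/2}$ transforms into an operator whose underlying symbol is controlled by the value of $(1+E)^{r/2}$ at the centering point $z_0$, giving $\|\Op{b_s}\vf_{z_0}\|_r\leq C\,\h^{3/2}|F|_T^3(1+\cE_T)^{r/2}$; likewise $\|\cU_2^\h(s,0)\|_{\cL(\cH^r)}$ is bounded by a quantity depending polynomially on $|F|_T$ and $1+\cE_T$, absorbed in $\Gamma$. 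Inserting these estimates in the Duhamel integral and integrating $(1+|t-s|)^{r\mu}$ over $[0,t]\subset[0,T]$ supplies the extra $(1+T)^{\mu r+1}$ factor and completes \eqref{approx}.

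The main obstacle lies in this final step: extending the classical $L^2$ semiclassical approximation of \cite{hagedorn85,coro} to $\cH^r$ while preserving both the sharp $\sqrt\h$ gain and the stated $|F|_T^3(1+\cE_T)^{r/2}$ power. In $L^2$ the dilation $\Lambda_\h$ is an isometry, but in $\cH^r$ the conjugation of $H_l^{r/2}$ by $\Lambda_\h\cT_\h(z_0)$ produces Weyl symbols whose seminorms depend nontrivially on $\h$, $z_0$ and $F$, and one must carefully track them in the symbolic calculus so that no spurious $|F|_T^r$ factor — arising from the Jacobian $F_s$ in the Egorov conjugation — contaminates the final bound.
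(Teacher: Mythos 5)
Your overall strategy matches the paper: Duhamel's formula, the Taylor remainder $R=H-H_2$ of cubic order, exact Egorov for the quadratic flow $\cU_2^\h$, the unitary reduction via $\cT_\h(z_0)$ and $\Lambda_\h$, and Calderon--Vaillancourt. (Minor slip: with $a(\zeta)=R(s,\zeta-z_s)$, Lemma~\ref{egorov} gives $b_s(\zeta)=R\bigl(s,F_s(\zeta-z_0)\bigr)$, not $R\bigl(s,z_s+F_s(\zeta-z_0)\bigr)$; your later claim that $b_s$ vanishes to third order at $z_0$ implicitly uses the corrected expression.)

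The genuine gap is exactly the issue you flag at the end but do not resolve. After writing $(H-H_2)\cU_2^\h=\cU_2^\h\,\Op{b_s}$, you estimate $\|\cU_2^\h(s,0)\Op{b_s}\vf_{z_0}\|_r\leq\|\cU_2^\h(s,0)\|_{\cL(\cH^r)}\,\|\Op{b_s}\vf_{z_0}\|_r$ and then claim that $\|\cU_2^\h(s,0)\|_{\cL(\cH^r)}$ is controlled by polynomials in $|F|_T$ and $1+\cE_T$ to be absorbed in $\Gamma$. That does not work: $\Gamma$ must be independent of $T$ and $\h$, and the only way to bound $\|\cU_2^\h(s,0)\|_{\cL(\cH^r)}$ is through $(\cU_2^\h)^*H_l^{r/2}\cU_2^\h=\Op{\mathtt h_r\bigl(z_s+F_s(\zeta-z_0)\bigr)}$, whose seminorms grow like $|F_s|^N$ for large $N$ because every $\zeta$-derivative brings out a factor $F_s$ with no compensating $\sqrt\h$. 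The paper's proof avoids this precisely by \emph{not} splitting off $\cU_2^\h$: it inserts $H_l^{r/2}$ first, conjugates the product $H_l^{r/2}\,\Op{R(\tau,\cdot-z_\tau)}$ by $U^\h(\tau)=\cU_2^\h(\tau,0)\cT_\h(z_0)$ to get $\|\Op{\mathtt h_r(z_\tau+F_\tau\zeta)}\,\Op{R(\tau,F_\tau\zeta)}\vf_0\|_{L^2}$, and only then applies the dilation $\Lambda_\h$. After dilation both symbols become $\mathtt h_r(z_\tau+\sqrt\h F_\tau\zeta)$ and $R(\tau,\sqrt\h F_\tau\zeta)$, so every $\zeta$-derivative now costs a factor $\sqrt\h F_\tau$, which hypothesis \eqref{cond} bounds by $\kappa\leq 1$. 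This is what produces a uniform bound $(1+\cE_T)^{r/2}$ for the $\mathtt h_r$ seminorm (estimate \eqref{shr}) and $(\sqrt\h|F|_T)^3$ for the $R$ seminorm (estimate \eqref{sR}), with no spurious $|F|_T$ powers. Your construction would thus need to be reorganised so that the $\sqrt\h$ from $\Lambda_\h$ and the $F_\tau$ from Egorov sit on the same symbol before any seminorm is taken.
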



%

\begin{proof}
One starts with Duhamel's formula
\begin{equation}
\label{duh}
\cU^\h(t,0)\vf_{z_0} - \cU_2^\h(t,0)\vf_{z_0} = \frac{1}{\im \hbar} \int_0^t \cU^\h(t,\tau) \, \left[ {H}(\tau, x, \h D_x) - H_2(\tau, x, \h D_x) \right] \, \cU_2^\h(\tau, 0)\vf_{z_0} \, \di \tau .
\end{equation}
Recall  that $H_2(t,x,\xi)$ is the Taylor expansion at order two of $H(t,x, \xi)$ around the path $z_t$, thus  
\begin{equation}
\label{H-H2}
H(t, z) - H_2(t, z) = R(t, z-z_t), \quad z = (x,\xi) \in \R^{2d}
\end{equation}
 where
\begin{equation}
\label{R0}
\begin{aligned}
R(t, \zeta) = \sum_{\substack{\nu \in \N_0^{2d} \\ |\nu| = 3}} R_\nu(t, \zeta ) \cdot \zeta^\nu \ , \qquad 
 R_\nu(t, \zeta ):=  \frac{1}{(\nu - 1)!} \int_0^1 H^{(\nu)}\left(t,z_t + \theta \zeta\right) (1-\theta)^2 \, \di \theta . 
\end{aligned}
\end{equation}
Since $H(t, \cdot) \in \Sigma^m$, one has $R(t, \zeta) \in \Sigma^{m + 3l/(l+1)}$. Quantizing \eqref{H-H2} we obtain
\begin{equation}
\label{r.f}
{H}(\tau, x, \h D_x) - H_2(\tau, x, \h D_x) = \Op{R(\tau, \zeta- z_t)} .
\end{equation}
Inserting \eqref{r.f} into \eqref{duh} and taking the $\cH^r$ norm,  we have that
\begin{align*}
\norm{\cU^\h(t,0)\vf_{z_0} - \cU_2^\h(t,0)\vf_{z_0} }_r &\leq 
\frac{1}{ \hbar} \int_0^t 
\norm{\cU^\h(t,\tau)}_{\cL(\cH^r)} \norm{\Op{R(\tau, \zeta- z_\tau)} \cU_2^\h(\tau,0) \vf_{z_0}}_r \, \di \tau \\
& \stackrel{\eqref{gsn}}{\leq} C_r \frac{(1+|t|)^{1+r\mu}}{\hbar} 
\sup_{0 \leq \tau \leq t}  \norm{\Op{R(\tau, \zeta- z_\tau)} \cU_2^\h(\tau,0) \vf_{z_0}}_r 
\end{align*}
To control the last term we proceed as following. First remark that $\cU_2^\h(\tau,0)$ and $\cT_\hbar(z_0)$ are isometry in $L^2(\R^d)$, so is  
\begin{equation}
\label{U}
U^\h(t) :=  \cU_2^\h(\tau,0) \cT_\hbar(z_0) .
\end{equation}
Then, exploiting \eqref{c2} and the identity
\begin{align}
\label{U1}
U^\h(t)^*  \,\Op{a} U^\h(t) =   \Op{b} , \qquad b(t,\zeta):= a(z_t + F_t\zeta) ,
\end{align}
which follows by \eqref{T.op} and Lemma \ref{egorov},   we obtain 
\begin{align}
\notag
\norm{\Op{R(\tau, \zeta- z_\tau)}  \cU_2^\h(\tau,0) \vf_{z_0}}_r  
& = \norm{ H_l^{r/2}\Op{R(\tau, \zeta- z_\tau)} U^\h(\tau) \vf_0}_0\\
\notag
& = \norm{\left(U^\h(\tau)^*  \,H_l^{r/2} U^\h(\tau)\right) \Big(U^\h(\tau)^*\Op{R(\tau, \zeta- z_\tau)} U^\h(\tau) \Big)\vf_0}_0\\
\label{3l}
&  =    \norm{ \Op{{\mathtt h}_r(z_\tau + F_\tau\zeta)}  \, \Op{R(\tau, F_\tau\zeta)} \vf_0}_0 . 
\end{align}
In the last line we denoted  by ${\mathtt h}_r \in \Sigma^{\frac{lr}{l+1}}$ the symbol of $H_l^{r/2}$,  i.e. $H_l^{r/2} = \Op{{\mathtt h}_r}$.
We are left with estimating \eqref{3l}.
Let ${\mathtt h}_r(z_t; \zeta) := {\mathtt h}_r(\zeta + z_t)$. 
By \eqref{dil}
$$
\Lambda_\h^{-1} \Op{{\mathtt h}_r(z_\tau; F_\tau \zeta)}  \, \Op{R(\tau, F_\tau \zeta)} \Lambda_\h =
{\rm Op}^w_1\left({{\mathtt h}_r(z_\tau; \sqrt \h F_\tau \zeta)} \right) \, {\rm Op}^w_1{R(\tau, \sqrt \h F_\tau \zeta)}  .
$$
Thus,
using that $\Lambda_\h $ is unitary in $L^2(\R^d)$ and  writing $\vf_0 = \Lambda_\hbar \vf$, where $\vf(x):= \frac{1}{\pi^{d/4}} e^{-|x|^2}$,   we get 
\begin{align*}
 \eqref{3l} & 
 = \norm{ {\rm Op}^w_1\left({{\mathtt h}_r(z_\tau; \sqrt \h F_\tau \zeta)} \right) \, {\rm Op}^w_1\left({R(\tau, \sqrt \h F_\tau \zeta)}\right)  \vf }_0 .
\end{align*}
Now remark that $\vf$ is a Schwartz function, so by Calderon-Vaillancourt theorem  there exist $C, N >0$ such that 
\begin{align*}
\eqref{3l} \leq C \, 
\wp_N^{lr/(l+1)}\left({\mathtt h}_r(z_t; \sqrt \h F_t \zeta)\right) \ 
\wp_N^{m+3l/(l+1)}\left(R(\tau,  \sqrt \h F_t \zeta)\right) \ 
 \norm{\vf}_{\bar r} , 
\end{align*}
where  $\bar r=r+3 + m (l+1)/{l}$ . 
We are left with estimating the seminorms of the symbols.
By assumption \eqref{cond} we have 
$$
\sqrt \h |F_t| \leq \kappa \leq 1 , \qquad \forall \, 0 \leq t \leq T ,
$$
therefore the seminorm of ${\mathtt h}_r$ is controlled by
\begin{align*}
\wp^{lr/(l+1)}_N \left(\mathtt h_{r}(z_t; \sqrt \h F_t \zeta)\right) \leq  \wp^{lr/(l+1)}_N(\mathtt h_{r}) \sup_{\zeta \in \R^{2d}} \left|{\frac{\tk_0(\zeta + z_t)}{\tk_0(\zeta)}}\right|^{lr/(l+1)} .
\end{align*}
By  \eqref{prop:k0} and \eqref{prop:k1}, for any $t \in [0,T]$  we bound 
\begin{align*}
\sup_{\zeta \in \R^{2d}} \left|{\frac{\tk_0(\zeta + z_t)}{\tk_0(\zeta)}}\right|^{lr/(l+1)}  \leq  C'_l \tk_0(z_t)^{lr/(l+1)}  \leq C'_l \wt C'_l (1+ E(z_t))^{\frac{r}{2}} \leq C (1+ \cE_T)^{\frac{r}{2}} .
\end{align*}
Thus we proved
\begin{equation}
\label{shr}
\wp^{lr/(l+1)}_N \left(\mathtt h_{r}(z_t; \sqrt \h F_t \zeta)\right) \leq C (1+ \cE_T)^{\frac{r}{2}} , \qquad \forall \, 0 \leq t \leq T . 
\end{equation}
Consider now the seminorm of $R(\tau, \sqrt h F_t \zeta)$.
Proceeding as above and 
using the definition of $R$ in \eqref{R0} we obtain
\begin{equation}
\label{sR}
\wp_N^{m+3l/(l+1)}\left(R(\tau,  \sqrt \h F_t \zeta)\right) \leq  C (\sqrt \h |F|_T)^3 , \qquad \forall \, 0 \leq t \leq T . 
\end{equation}
Combining all estimates we have
$$
\norm{\cU^\h(t, 0) \vf_{z_0} - \cU_2^\h(t,0) \vf_{z_0}}_r \leq 
\Gamma  \frac{(1+T)^{1+r\mu}}{\hbar} 
 (\sqrt \h |F|_T)^3 (1+ \cE_T)^{\frac{r}{2}} , \qquad \forall \,0 \leq t \leq T 
$$
which proves \eqref{approx}.
\end{proof}

Theorem \ref{thm:rob} tells that it is possible to approximate,  in the $\cH^r$ topology,  the  quantum dynamics of a coherent state with the approximate flow generated by  a quadratic Hamiltonian. In the next proposition we show that it is easy to compute the values of observables along the approximate flow.

\begin{proposition}
\label{prop:gr}
Assume $({\rm H}_{\rm cl})$ and  $({\rm H}_{\rm qu})$.
Fix  $z_0 \in \R^{2d}$ and $\kappa \in (0,1]$.
 Furthermore assume that $a \in \Sigma^\rho$, $\rho \geq 0$, fulfills the condition
\begin{equation}
\label{a.ass}
\abs{\partial_x^\alpha \partial_\xi^\beta a(x,\xi)} \leq C_{\alpha \beta}  \ \tk_0(x,\xi)^{\rho - \frac{l\beta + \alpha}{l+1}} , \quad \forall |\alpha|+|\beta| \leq  1 .
\end{equation}
  Then there exist a constant $\Gamma_1 >0$ and for any 
  $\h_0, T>0$ fulfilling 
\begin{equation}
\label{h0FT}
\sqrt \h_0 \, |F|_T \leq \kappa , 
\end{equation}
  a smooth function $b: (0, \h_0]\times [0, T] \to \R$ such that 
\begin{equation}
\label{A}
\langle \Op{a} \, \cU_2^\h(t,0)\vf_{z_0}, \,  \cU_2^\h(t,0)\vf_{z_0} \rangle  =  a(z_t)  + b(\h, t) , 
\end{equation}
and moreover 
\begin{equation}
\label{R}
\abs{b(\h, t)} \leq  \Gamma_1 \, \h^\frac12  \, |F_t|  \left( 1+ \cE_t \right)^{ \frac{(l+1)\rho - 1}{2l}} , \qquad
\forall 0 \leq t \leq T , \ \quad \forall \h \in (0, \h_0] .
\end{equation}
\end{proposition}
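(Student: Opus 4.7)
The plan is to first reduce the matrix element on the left-hand side of \eqref{A} to one against the $\h$-independent normalized Schwartz Gaussian $\vf \in L^2(\R^d)$ defined by $\vf_0 = \Lambda_\h \vf$, then Taylor expand in $\sqrt{\h}$, and finally control the remainder using the sharper decay \eqref{a.ass} of the first derivatives of $a$. Recalling \eqref{c2} and setting $U^\h(t) := \cU_2^\h(t,0)\cT_\h(z_0)$ as in \eqref{U}, I would combine \eqref{U1} (which packages Lemma \ref{egorov} and \eqref{T.op}) with the dilation identity \eqref{dil} to obtain
\[
\langle \Op{a}\,\cU_2^\h(t,0)\vf_{z_0},\,\cU_2^\h(t,0)\vf_{z_0}\rangle = \langle {\rm Op}^w_1\!\big(a(z_t+\sqrt{\h}F_t\zeta)\big)\vf,\vf\rangle.
\]

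A first order Taylor expansion in $\sqrt{\h}$ around $\zeta=0$ then yields
\[
a(z_t+\sqrt{\h}F_t\zeta) = a(z_t) + \sqrt{\h}\int_0^1 \nabla a(z_t+s\sqrt{\h}F_t\zeta)\cdot F_t\zeta\,ds.
\]
The constant term produces $a(z_t)\|\vf\|_{L^2}^2 = a(z_t)$ in the matrix element, so \eqref{A} holds with
\[
b(\h,t) = \sqrt{\h}\int_0^1 \langle {\rm Op}^w_1(g_s)\vf,\vf\rangle\,ds, \qquad g_s(\zeta):=\nabla a(z_t+s\sqrt{\h}F_t\zeta)\cdot F_t\zeta,
\]
and only the bound \eqref{R} remains to be established.

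The core of the argument is the symbol estimate on $g_s$. Here I would invoke the refined decay \eqref{a.ass}, according to which $|\partial_{x_k}a(w)|\leq C\,\tk_0(w)^{\rho-1/(l+1)}$ and $|\partial_{\xi_k}a(w)|\leq C\,\tk_0(w)^{\rho-l/(l+1)}$, paired with the linear factor $F_t\zeta$ whose components are bounded by $|F_t|\,\tk_0(\zeta)^{l/(l+1)}$. Splitting the weight via the submultiplicativity \eqref{prop:k0},
\[
\tk_0(z_t+s\sqrt{\h}F_t\zeta)^{\rho-1/(l+1)} \leq C\,\tk_0(z_t)^{\rho-1/(l+1)}\,\tk_0(s\sqrt{\h}F_t\zeta)^{\rho-1/(l+1)},
\]
and using $s\sqrt{\h}|F_t|\leq \kappa\leq 1$ from \eqref{h0FT} to control the second factor by a constant multiple of $\tk_0(\zeta)^{\rho-1/(l+1)}$, the surviving $z_t$-factor becomes $\tk_0(z_t)^{\rho-1/(l+1)}\sim (1+E(z_t))^{((l+1)\rho-1)/(2l)}$ by \eqref{prop:k1}; the analogous splitting for $\partial_{\xi_k}a$ yields the smaller exponent $\tk_0(z_t)^{\rho-l/(l+1)}$, dominated by the previous one. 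The same bookkeeping applies to all higher $\zeta$-derivatives of $g_s$ (each differentiation costs at most factors of $\sqrt{\h}|F_t|\leq 1$), yielding a uniform seminorm bound of the form $|F_t|\,(1+\cE_t)^{((l+1)\rho-1)/(2l)}$ in the appropriate $\Sigma$-class. Applying Calderon-Vaillancourt at $\h=1$ in the weighted form \eqref{CV2}, together with the fact that $\vf$ lies in every $\cH^r$, converts this into a bound on $|\langle {\rm Op}^w_1(g_s)\vf,\vf\rangle|$; multiplying by the $\sqrt{\h}$ in front produces \eqref{R}.

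The main obstacle is precisely this last bookkeeping step: without exploiting the \emph{sharper} decay \eqref{a.ass} one would only have $|\nabla a|\lesssim \tk_0^{\rho}$, leading to the strictly worse exponent $(l+1)\rho/(2l)$ of $(1+\cE_t)$ in \eqref{R}. Matching each $\partial_j a$ with the $\tk_0$-weight of the corresponding component of $F_t\zeta$ is what produces the $-1/(2l)$ improvement in the exponent, and hence hypothesis \eqref{a.ass} is essential rather than a mere convenience.
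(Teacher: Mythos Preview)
Your reduction to $\langle {\rm Op}^w_1\big(a(z_t+\sqrt{\h}F_t\zeta)\big)\vf,\vf\rangle$ and the first-order Taylor expansion are fine, and they match the paper's setup. The gap is in the last step. To invoke Calderon--Vaillancourt on ${\rm Op}^w_1(g_s)$ you must control $\zeta$-derivatives of $g_s$ up to some order $N\geq 1$. But already $\partial_{\zeta_k}g_s$ contains the term
\[
s\sqrt{\h}\sum_{j,m}(F_t)_{mk}\,(\partial_m\partial_j a)(z_t+s\sqrt{\h}F_t\zeta)\,(F_t\zeta)_j,
\]
and hypothesis \eqref{a.ass} says nothing about \emph{second} derivatives of $a$: for those you only have $a\in\Sigma^\rho$, i.e.\ $|\partial^2 a(w)|\leq C\,\tk_0(w)^{\rho}$. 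After splitting $\tk_0(w)\leq C\,\tk_0(z_t)\,\tk_0(s\sqrt{\h}F_t\zeta)$, the $z_t$--factor in this term is $\tk_0(z_t)^{\rho}$, not $\tk_0(z_t)^{\rho-1/(l+1)}$. The extra chain-rule factor $\sqrt{\h}|F_t|\leq\kappa$ does \emph{not} compensate, since $\kappa$ is fixed while $\tk_0(z_t)^{1/(l+1)}\sim(1+\cE_t)^{1/(2l)}$ grows along the unbounded trajectory. Hence your seminorm bound is $|F_t|(1+\cE_t)^{(l+1)\rho/(2l)}$, and the resulting estimate on $b$ misses the stated exponent $\frac{(l+1)\rho-1}{2l}$ by exactly the $1/(2l)$ that \eqref{a.ass} is designed to produce.

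The paper sidesteps this by \emph{not} using Calderon--Vaillancourt at this point. Instead it writes the matrix element via the Wigner function of $\vf_0$ (trace formula), obtaining the exact Gaussian integral
\[
\langle \Op{a(z_t+F_t\zeta)}\vf_0,\vf_0\rangle=\pi^{-d}\int_{\R^{2d}} a\big(z_t+\sqrt{\h}F_t\zeta\big)\,e^{-|\zeta|^2}\,\di\zeta,
\]
which requires no $\zeta$-regularity of the symbol whatsoever. The Taylor remainder is then bounded pointwise by the Lagrange mean value theorem using only \emph{first} derivatives of $a$, so \eqref{a.ass} applies directly and yields the $\tk_0(z_t)^{\rho-1/(l+1)}$ factor; the polynomial growth in $\zeta$ coming from $\tk_0(\sqrt{\h}F_t\zeta)^{\rho-1/(l+1)}$ is harmlessly absorbed by $e^{-|\zeta|^2}$. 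The minimal fix to your argument is therefore to replace the Calderon--Vaillancourt step by the identity $\langle {\rm Op}^w_1(g_s)\vf,\vf\rangle=\pi^{-d}\int g_s(\zeta)e^{-|\zeta|^2}\di\zeta$ and estimate the integral directly.
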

\begin{proof}
With $U^\h(t)$ defined in \eqref{U} and exploiting \eqref{U1} we get 
\begin{align*}
\la \Op{a} \cU_2^\h(t,0)\vf_{z_0}, \cU_2^\h(t,0) \vf_{z_0} \ra 
& = \la U^\h(t)^* \Op{a}  U^\h(t) \vf_0, \vf_0 \ra 
 = \la \Op{a(F_t \zeta + z_t)} \vf_0, \vf_0 \ra 
\end{align*}
To compute the last scalar product we proceed as following.
Denote by $\Psi$ the orthogonal projector on $\vf_0$, $\Psi u := \la u, \vf_0 \ra \vf_0$; 
it is a pseudodifferential operator with $\h$-Weyl symbol given by the  Wigner function
$$
\cW_{\vf_0}(x,\xi) = 2^d \,  e^{- \frac{|x|^2 + |\xi|^2}{\hbar}} ,
$$
see e.g. \cite{coro}.
Now remark that for any operator $A$ one has
\begin{align*}
\la A \vf_0, \vf_0 \ra & = \la A \Psi \vf_0, \vf_0 \ra = \sum_{j \geq 0} \la A \Psi \vf_j, \vf_j \ra   = {\rm tr}(A \Psi) , 
\end{align*}
where $\{\vf_j\}_{j \geq 0}$ is any orthonormal basis  of $L^2(\R^{d})$ that completes $\vf_0$.\\
If $ A= \Op{a} $ is a   pseudodifferential operator, 
trace formula (see  \cite[Proposition II--56]{robook}) 
assures that 
$$ {\rm tr}(A \Psi) = (2\pi\h)^{-d}\int_{\R^{2d}} a(\zeta) \cW_{\vf_0}(\zeta) \, \di \zeta . $$
In our case  we  obtain 
\begin{align}
\notag
\la  \Op{a(F_t \zeta + z_t)}  \vf_0, \vf_0 \ra & = \frac{1}{(\pi \hbar)^d} \int_{\R^{2d}} a(F_t \zeta + z_t) \, e^{- \frac{|\zeta|^2}{\hbar}} \di \zeta \\
\label{b1}
& = \pi^{-d} \int_{\R^{2d}} a(\sqrt \hbar F_t \zeta + z_t) \, e^{- {|\zeta|^2}} \di \zeta
\end{align}
Now we write
\begin{align}
\label{b.dec}
a\Big(\sqrt \hbar F_t \zeta + z_t\Big) = a(z_t) + \tb(\h, t, \zeta) \ , \qquad 
\tb(\h, t, \zeta) := a\Big(\sqrt \hbar F_t \zeta + z_t \Big) - a(z_t) .
\end{align}
Inserting  \eqref{b.dec} in \eqref{b1} gives
formula \eqref{A} with  $b(\h, t) := \pi^{-d} \int \tb(\h, t,\zeta) e^{-|\zeta|^2} \di \zeta$. 
We prove now \eqref{R}. By Lagrange mean value theorem and assumption \eqref{a.ass}  we get
\begin{align*}
\abs{\tb(\h, t, \zeta)} 
&\leq   C \h^\frac12 \, |F_t| \, \norm{\zeta} \, \sup_{0 \leq s \leq 1}\tk_0\Big(\hbar^{\frac12} F_t \zeta + s z_t\Big)^{\rho - \frac{1}{l+1}} \\
&\leq  C' \h^\frac12  \, |F_t| \,  \norm{\zeta} \,\tk_0\Big(\hbar^{\frac12} F_t\zeta \Big)^{\rho - \frac{1}{l+1}} \, \tk_0(z_t)^{\rho - \frac{1}{l+1}}
\end{align*}
Now insert the last estimate in \eqref{b1}, and use 
\eqref{h0FT} and the inequality \eqref{prop:k1} 
to obtain the claimed  result.
\end{proof}

\section{Application to anharmonic oscillators}
In this section we apply Theorem \ref{thm:rob} to construct a solution of equation \eqref{sls} whose Sobolev norms grow for long but finite time.

\subsection{Unbounded orbits for classical anharmonic oscillator}
The first step is to consider the mechanical system \eqref{mech} and construct a forcing term $\beta(t)$, smooth and bounded with its derivatives, 
  so that there exists at least one unbounded solution.

This is the content of the  next result.
\begin{proposition}
\label{thm:mech}
There exists  a smooth function $\b \in C^\infty(\R, \R)$ fulfilling \eqref{beta}, 
such that equation \eqref{mech} possesses an unbounded solution $q(t)$. Moreover there exist $ C_1, C_2 >0$ s.t.
\begin{equation}
 \label{mech.en2}
 C_1 \, \left[\log(2+t)\right]^2 \leq  E\left(q(t), p(t)\right) \leq  C_2 \, \left[\log(2+t)\right]^2  \qquad \forall t\geq 0  .
 \end{equation} 
\end{proposition}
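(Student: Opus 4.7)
The strategy is to reduce to a one-dimensional forced anharmonic oscillator, pass to action--angle variables, and construct $\beta$ there by adapting the mechanism of Levi and Zehnder \cite{leze}.

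First, observe that the equations for $q_j$ with $j \ge 2$ in \eqref{mech} are uncoupled, autonomous and integrable, so each such $q_j$ yields a bounded periodic orbit contributing only an $O(1)$ term to $E(q,p)$. It is therefore sufficient to construct $\beta \in C^\infty(\R,\R)$ fulfilling \eqref{beta} and an initial datum $(q_0, p_0) \in \R^2$ such that the solution of the one-dimensional equation $\ddot q + q^{2l-1} = \beta(t)$ has mechanical energy $E_1(t) := \tfrac12 p(t)^2 + \tfrac{1}{2l} q(t)^{2l}$ satisfying $C_1 [\log(2+t)]^2 \le E_1(t) \le C_2 [\log(2+t)]^2$.

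Second, introduce action--angle coordinates $(I, \theta)$ for the unperturbed oscillator, with $I>0$ and $\theta$ a $2\pi$-periodic angle. By the homogeneity of $W_l$ one has $h_0(I) = \kappa_l I^{2l/(l+1)}$, hence frequency $\omega(I) := h_0'(I) = \widetilde \kappa_l I^{(l-1)/(l+1)}$, and $q$ expressed in these coordinates reads $q(I, \theta) = I^{1/(l+1)} \widetilde Q(\theta)$ for a fixed smooth $2\pi$-periodic $\widetilde Q$ whose Fourier coefficients $\widehat Q_{\pm 1}$ are non-zero. The perturbed system becomes
\begin{equation*}
\dot I = -\beta(t)\, \partial_\theta q(I, \theta), \qquad \dot \theta = \omega(I) + \beta(t)\, \partial_I q(I, \theta),
\end{equation*}
so growth of $I$ is produced by driving the $\pm 1$ Fourier harmonics of $\widetilde Q$.

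Third, following and modifying the strategy of \cite{leze}, seek $\beta$ as a countable superposition of pure tones
\begin{equation*}
\beta(t) = \sum_{n \ge 1} A_n \cos(\Omega_n t + \varphi_n),
\end{equation*}
with $\Omega_n \uparrow \infty$, $A_n \downarrow 0$, and times $t_n \uparrow \infty$ chosen so that $\omega(I(t)) = \Omega_n$ exactly once on each window $[t_n, t_{n+1}]$. Fix the phases $\varphi_n$ inductively so that at each resonance crossing the driving tone is in phase with $e^{i\theta(t)}$; a standard passage-through-resonance computation then yields a positive action gain $\Delta I_n \asymp A_n I(t_n)^{1/(l+1)} / \sqrt{|\dot \omega(I(t_n))|}$, while between resonances $I$ oscillates by a bounded amount, controlled by classical averaging for near-integrable systems. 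Choosing $t_n$ to grow exponentially and solving the resulting recursion with the ansatz $I(t_n) \asymp [\log t_n]^{(l+1)/l}$ produces the desired law $E_1(t_n) = h_0(I(t_n)) \asymp [\log t_n]^2$; a continuity/comparison argument then extends the two-sided estimate to all $t \ge 0$.

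The main technical hurdle is to reconcile the smoothness requirement \eqref{beta} with this resonance mechanism: since $\Omega_n \asymp [\log t_n]^{(l-1)/l} \to \infty$, term-by-term differentiation gives $\|\partial_t^\ell \beta\|_{L^\infty} \le \sum_n A_n \Omega_n^\ell$, forcing $A_n$ to decay super-algebraically in $n$, while the gain estimate demands $A_n$ be large enough to sustain the prescribed growth. Because $\Omega_n$ grows only as a fractional power of $\log t_n$ whereas $t_n$ grows exponentially, the two budgets turn out to be simultaneously realizable (a direct check shows $A_n \sim e^{-n/2}$ works); the delicate part of the argument is the careful inductive tuning of $(A_n, \Omega_n, \varphi_n, t_n)$ and the quantitative control of the averaging error, in particular the potential phase drift between the true angle $\theta(t)$ and the designed phase $\Omega_n t + \varphi_n$ accumulated over very long inter-resonance windows.
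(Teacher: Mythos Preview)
Your proposal takes a genuinely different route from the paper's, and while the idea of a trigonometric superposition driving successive resonance crossings is plausible, the sketch as written has a real gap at the phase-coherence step.

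The paper avoids phase issues entirely by a neat trick: it does not prescribe $\beta(t)$ directly but defines it \emph{a posteriori} from a solution of an auxiliary autonomous nonlinear equation. Concretely, it considers
\[
\ddot y + y^{2l-1} = f(y, \dot y), \qquad f(y,\dot y) := g_1(y)\, g_2(\dot y)\, e^{-\dot y},
\]
with $g_1, g_2$ cutoffs making $f$ supported on $\{|y|\le 1,\ \dot y > 0\}$. Along the solution with $y(0)=\dot y(0)=1$, the energy is monotone and gains $\asymp e^{-\sqrt{2E_n}}$ at the $n$-th left-to-right passage through $[-1,1]$; a comparison argument with an ODE then gives $E_n \asymp (\log n)^2$ and hence \eqref{mech.en2}. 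One then sets $\beta(t) := f(y(t),\dot y(t))$, so $q_1(t)=y(t)$ solves the forced equation by construction, and the factor $e^{-\dot y}$ ensures all $t$-derivatives of $\beta$ are bounded via Fa\`a di Bruno. No phases, no averaging, no inductive tuning.

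In your approach, by contrast, the action gain on the $n$-th window $[t_n, t_{n+1}]$ requires the tone $A_n\cos(\Omega_n t+\varphi_n)$ to remain coherent with $e^{i\theta(t)}$ long enough to produce $\Delta I_n \asymp n^{1/l}$. But the detuning $\omega(I(t))-\Omega_n$ grows from $0$ to $\Omega_{n+1}-\Omega_n \asymp n^{-1/l}$ as $I$ increases, so the accumulated phase drift $\int_{t_n}^{t_{n+1}} (\omega(I)-\Omega_n)\,dt$ is of order $n^{-1/l}(t_{n+1}-t_n) \asymp n^{-1/l} e^{cn}$, which is enormous; once it exceeds $O(1)$ the forcing decoheres and the secular growth stalls long before $I$ reaches $I_{n+1}$. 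Your crossing estimate $\Delta I_n \asymp A_n I_n^{1/(l+1)}/\sqrt{|\dot\omega|}$ implicitly assumes an \emph{externally prescribed} sweep rate, whereas here $\dot\omega = \omega'(I)\dot I$ is self-consistent and driven by the very forcing you are analyzing; the resulting capture-versus-passage dichotomy is not addressed. Closing this would require a much finer (and likely far denser) choice of tones, or a different mechanism altogether --- which is precisely what the paper's position-space kicks supply.
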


To prove the result we  follow  the strategy of  \cite{leze}.
First remark that the dynamic of \eqref{mech} is decoupled  into one dimensional systems. 
Since   $q_j = p_j = 0$ $\ \forall j \geq 2$ is an invariant subspace, we take an initial datum with $q_j(0) = p_j(0) = 0$ $\, \forall j \geq 2$.
Then the   dynamics of \eqref{mech} becomes one dimensional and restricted to the variables $(q_1, p_1)$.

 The idea is to create $\b(t)$ by 
giving a particular solution $q(t)$ of \eqref{mech} a ``helping kick'' to the right direction each time the
solution passes through the interval $-1 \leq  q_1 \leq  1$ from left to right, and make $\b(t)= 0$
at all other times. With such a $\b(t)$  the energy along the solution will increase at each passage from $-1$ to $1$ while remaining constant between consecutive passages.

Furthermore it is important to weaken the ``kicks'' at every passage, otherwise the external force $\beta(t)$ will have some derivatives which are unbounded in $t$. 

In order to construct $\b(t)$ we use an auxiliary  nonlinear equation. 
First define $g_1$ and $g_2$ to be positive cut-off functions on $\R$ s.t.  
\begin{equation*}
g_1(y):= 
\begin{cases}
1 \ , \quad |y| \leq 1/2 \\
0 \ , \quad |y| \geq 1 
\end{cases}  \ , 
\qquad
g_2(y):= 
\begin{cases}
0 \ , \quad y \leq 0 \\
1 \ , \quad y  \geq 1 
\end{cases}  \ , \qquad y \in \R . 
\end{equation*}
Then consider the nonlinear equation 
\begin{equation}
\label{mech.a}
\ddot y + y^{2l-1} = f(y, \dot y) , \qquad y, \dot y, \ddot y \in \R
\end{equation}
with 
\begin{equation}
\label{def.f}
f(y, \dot y) := g_1(y) \, g_2(\dot y) \, e^{- \dot y} \ . 
\end{equation}
Abusing notation, we denote again by $E(y, \dot y)$ the mechanical energy 
$$
E(y, \dot y):= \frac{\dot{y}^2}{2} + \frac{y^{2l}}{2l} . 
$$
\begin{proposition}
\label{prop:e}
Consider equation \eqref{mech.a}. The solution with initial datum    $y(0) = \dot y(0) = 1$ is globally defined and unbounded. More precisely there exist $C_1, C_2 >0$ s.t. 
\begin{equation}
\label{esty}
C_1 \log(1+t)^2 \leq E(y(t), \dot y(t)) \leq C_2 \log(1+t)^2 \ , \qquad \forall t \geq 1 \ . 
\end{equation}
\end{proposition}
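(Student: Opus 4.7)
The plan is to view \eqref{mech.a} as a perturbation of the free anharmonic oscillator $\ddot y + y^{2l-1}=0$ by the forcing $f(y,\dot y)$, which acts as a sequence of small ``kicks'' that occur each time the orbit crosses the strip $\{|y|\leq 1,\ \dot y>0\}$. Between consecutive kicks the motion coincides with a free energy level curve, which is bounded (the potential is confining) and periodic. Therefore the orbit consists of successive revolutions, and I will encode the dynamics in the discrete sequence $E_n := E(y(t_n),\dot y(t_n))$, where $t_n$ denotes the time at which the $n$-th rightward crossing of $[-1,1]$ ends.

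\textbf{Monotonicity and global existence.} A direct computation gives
\[
\dot E = f(y,\dot y)\,\dot y = g_1(y)\,g_2(\dot y)\,\dot y\, e^{-\dot y}\ \geq\ 0,
\]
the inequality because $g_2(\dot y)=0$ for $\dot y\leq 0$. The right-hand side is bounded by an absolute constant, hence $E(t)$ grows at most linearly and the solution exists for all $t\geq 0$.

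\textbf{One kick and one period.} On the $n$-th rightward crossing of $[-1,1]$ one has $\tfrac{dE}{dy}=f(y,\dot y)$ and $\dot y=\sqrt{2E-y^{2l}/l}$, so $\dot y$ differs from $\sqrt{2E_{n-1}}$ by an amount $O(1)$ uniformly in $n$. For $E_{n-1}$ large enough that $g_2(\dot y)=1$, integrating in $y$ produces constants $c_1,c_2>0$ with
\[
c_1\, e^{-\sqrt{2E_{n-1}}}\ \leq\ E_n-E_{n-1}\ \leq\ c_2\, e^{-\sqrt{2E_{n-1}}}.
\]
The standard scaling $y\mapsto (2lE)^{1/(2l)} y$ in the free-oscillator period integral gives $T(E)\asymp E^{-(l-1)/(2l)}$, so the time between consecutive rightward crossings satisfies $t_n-t_{n-1}\asymp E_n^{-(l-1)/(2l)}$, up to an $O(1)$ correction from the kick duration.

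\textbf{Asymptotics in $n$ and in $t$.} Setting $u_n=\sqrt{2E_n}$, the recursion becomes $u_n-u_{n-1}\asymp e^{-u_{n-1}}/u_{n-1}$, whose continuous shadow $u\,e^u\,du\asymp dn$ integrates to $u\sim\log n$, hence $E_n\asymp(\log n)^2$. Summing the periods,
\[
t_n\ \asymp\ \sum_{k=1}^{n}(\log k)^{-(l-1)/l}\ \asymp\ \frac{n}{(\log n)^{(l-1)/l}},
\]
which upon inversion yields $\log n\asymp\log t_n$ and therefore $E_n\asymp(\log t_n)^2$. Since $E$ is monotone and $t_n-t_{n-1}\to 0$, this asymptotic propagates to every real time $t\geq 1$, which is \eqref{esty}.

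\textbf{Main obstacle.} The delicate step is the last one: because the kick strength $e^{-\sqrt{2E}}$ is extremely sensitive to errors in the exponent, the comparison between the discrete recursion and its continuous shadow must be carried out honestly so as to preserve matching upper and lower bounds, and the subsequent Abel-summation-style inversion of $n\mapsto t_n$ must likewise not lose either inequality. A smaller technicality is the initial transient in which $\dot y$ sits near $1$ and the cutoff $g_2(\dot y)$ is not yet saturated, but this affects only finitely many kicks and is absorbed into the constants $C_1,C_2$ in \eqref{esty}.
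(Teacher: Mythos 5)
Your proposal follows essentially the same route as the paper: monotonicity of $E$, the two-sided exponential estimate on the energy increment $E_n - E_{n-1}$ per rightward crossing, resolving the recursion to get $E_n\asymp(\log n)^2$, and inverting the period sum $t_n\asymp\sum_k T(E_k)\asymp n/(\log n)^{(l-1)/l}$. The only minor variation is cosmetic: you put $E_{n-1}$ in both exponents of the increment bound (justified because $\dot y=\sqrt{2E_{n-1}}+O(1)$ across one crossing), whereas the paper keeps $E_{n+1}$ in the lower-bound exponent and then deduces $E_{n+1}/E_n\le K$ to symmetrize; the paper also carries out the discrete-to-continuous comparison you flag as the delicate step explicitly, via a piecewise-linear interpolant $\eta(\theta)$ sandwiched between super- and sub-solutions of explicitly solvable separable ODEs.
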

\begin{proof}
Along a solution of \eqref{mech.a} the function $E(t) \equiv E(y(t), \dot y(t))$ fulfills 
\begin{equation}
\label{en.der}
\frac{\di}{\di t} E(t) = \dot y \, f(y, \dot y) \geq 0 \ . 
\end{equation}
More precisely $\frac{\di}{\di t} E(t) >0 $ when $\abs{y(t)} < 1$ and $\dot y(t) >0$, otherwise $\frac{\di}{\di t} E(t) = 0 $.

Set $t_0=0$, define the increasing sequence of all times $0 < t_1 < t_2 < \ldots$ such that $y(t_n) =\pm 1$, $\dot y(t_n) > 0$ for $n\geq1$, and denote $E_n := E(t_{2n})$ $\, \forall n \geq 0$. It is easy to verify that such a sequence is well defined.

By \eqref{en.der} and the definition of $f(y, \dot y)$ we have that $E_n$ is monotone increasing and furthermore
\begin{equation}
\label{en.c}
E(t) = E_n \ , \qquad \forall t_{2n} \leq t \leq t_{2n+1} , \qquad \forall n \geq 0  \ .
\end{equation}
Observe that $E_n > \frac{1}{2l}$ for all $n\geq 0 $.
Using that
$$
E_{n} \leq E(t) \leq E_{n+1} \ , \qquad \forall \   t_{2n+1} \leq t \leq t_{2n+2} , 
$$
and $\abs{y } \leq 1$ one obtains the bound 
\begin{equation}
\label{doty}
\sqrt{2 E_n - \frac{1}{l}} \leq \dot y(t) \leq \sqrt{2 E_{n+1}} \ , \quad \forall \   t_{2n+1} \leq t \leq t_{2n+2} .
\end{equation}
Next write 
\begin{align*}
E_{n+1} - E_n = \int_{t_{2n}}^{t_{2n+2}} \dot y(t) \, f(y(t), \dot y(t)) \, \di t = \int_{-1}^1 g_1(y) g_2(\dot y) e^{-\dot y} \, \di y ;
\end{align*}
this integral can be estimated by \eqref{doty} obtaining
\begin{equation}
\label{en.est}
c e^{- \sqrt{2E_{n+1}}} \leq E_{n+1} - E_n \leq 2 e^{- \sqrt{2 E_n - \frac{1}{l}}}  \leq 2 e^{\sqrt{\frac{1}{l}}} e^{-\sqrt{2 E_n}}  \ , \qquad \forall n \geq 0 \ , 
\end{equation}
for some constant $c>0$ depending only on the choice of the cutoff functions $g_1, g_2$.
We claim that 
$$
\lim_{n \to \infty} E_n = + \infty \ . 
$$
Indeed, the limit exists since $\{ E_n \}_{n \geq 1}$ is an increasing sequence. Assuming that $\lim\limits_{n \to \infty} E_n = E_\infty < \infty$, one gets a contradiction when passing to the limit in \eqref{en.est} (recall that $E_n>\frac{1}{2l}$).

Now use  \eqref{en.est} and the fact that  $E_n \geq \frac{1}{2l}$ $\, \forall n$, to get that $1 \leq E_{n+1}/E_{n} \leq K := 1+4l$, which implies that
\begin{equation}
\label{en.est2}
c e^{- \sqrt{2 K E_{n}}} \leq E_{n+1} - E_n \leq 2 e^{\sqrt{\frac{1}{l}}} e^{-\sqrt{2 E_n}} \ , \qquad \forall n \geq 0 \ . 
\end{equation}
To estimate $E_n$ we define the interpolating function
$$
\eta(\theta) = (\theta-n) E_{n+1} + (1+n -\theta) E_n \ , \quad n \leq \theta \leq n+1 , 
$$
so that the right derivative $D_+$ of $\eta$ fulfills
\begin{equation}
\label{en.est4}
c e^{- \sqrt{2K \eta(\theta)}} \leq D_+ \eta(\theta) \leq 2 e^{\sqrt{\frac{1}{l}}} e^{- \sqrt{2 \eta(\theta)/K}}   \ , \qquad \forall \theta \geq 0 \ . 
\end{equation}
To estimate $\eta$ we will use the method of the super and sub solutions. 
In particular, for any 
$$K'  > 2 K = 2(1+4l)$$
there exists $c_{K'} >0$ so that 
\begin{equation}
\label{en.est5}
c_{K'} \, \sqrt{\eta(\theta)} \, e^{- \sqrt{{K'} \eta(\theta)}} \leq D_+ \eta(\theta) \leq 2 e^{\sqrt{\frac{1}{l}}} \sqrt{2l\eta(\theta)} \, e^{- \sqrt{ 2\eta(\theta)/K}}   \ , \qquad \forall \theta \geq 0 \ ,
\end{equation}
where we used also that $\eta(\theta) \geq \frac{1}{2l}$.
The solution of this differential inequality can be estimated by the supersolution and subsolution method: in particolar consider the differential equations 
\begin{equation*}
\xi'(\theta) = c_{K'}\,  \sqrt{\xi(\theta)} \, e^{- \sqrt{{K'} \xi(\theta)}} \ , \quad  \zeta'(\theta) =  2 e^{\sqrt{\frac{1}{l}}} \sqrt{2l\zeta(\theta)} \, e^{- \sqrt{2 \zeta(\theta)/K}}  \ 
\end{equation*}
and initial condition $\xi(0) = \zeta(0) = \eta(0)$. Then one has $\xi(\theta) \leq \eta(\theta) \leq \zeta(\theta)$ for all $\theta \geq 0$.
A simple computation shows that
\begin{equation*}
\frac{1}{{K'}} \left[\log \left(e^{ \sqrt{{K'}} \eta(0)} + \frac{c_{K'} \sqrt{{K'}}}{2} \theta\right) \right]^2 \leq \eta(\theta) \leq \frac{K}{2} \left[\log\left(e^{\sqrt{2\eta(0)/K}} + 2 \sqrt{l/K} e^{\sqrt{1/l}} \,  \theta\right) \right]^2   \ , \qquad \forall \theta \geq 0 \ . 
\end{equation*}
Evaluating this expression at $\theta = n$, one has 
\begin{equation}
\label{en.est6}
\ta [\log (2+ n)]^2  \leq E_n \leq \tb [\log( 2+n)]^2 \ , \qquad \forall n  \geq 0 \ 
\end{equation}
for some positive constants $\ta, \tb$.
Now we need to relate $n$ with $t_n$. To do so,
denote by $T(E)$ the period of oscillation of the solutions of $\ddot y  + y^{2l-1} = 0$ with energy $E >0$.
It is given by
\begin{equation}
\label{T.per}
T(E) = c_l \, E^{- \frac{l-1}{2l}}
\end{equation}
for some constant $c_l >0$.
Moreover in our case 
\begin{equation}
\label{period.est}
\frac{T(E_{n+1})}{2} \leq t_{2n+2} - t_{2n} \leq T(E_n) \ , \qquad \forall n \geq 0 . 
\end{equation}
Using \eqref{en.est6} and the explicit expression \eqref{T.per}, one has
\begin{equation}
\label{en.est7}
\ta\, [\log(2+n)]^{- \frac{l-1}{l}}    \leq t_{2n+2} - t_{2n} \leq  \tb \, [\log (2+n)]^{- \frac{l-1}{l}} \ , \qquad \forall n  \geq 0 \ . 
\end{equation}
for some new constants $\ta, \tb$ different from those in \eqref{en.est6}.
Now write 
$t_{2n} = \sum_{m = 0}^{n-1} t_{2m+2} - t_{2m}$, thus \eqref{en.est7} and the estimates  
$$
c_1 \frac{n}{[\log (2+n)]^\alpha} \leq \sum_{m=0}^{n-1} \frac{1}{[\log (2+m)]^\alpha} \leq c_2 \frac{n}{[\log (2+n)]^\alpha} , \qquad \forall \alpha \in [0,1] , \ \ \forall n \geq 1 
$$
show that 
\begin{equation}
\label{en.est8}
 \frac{\tilde \ta \, n}{[\log (2+n)]^{(l-1)/l}} \leq t_{2n} \leq  \frac{\tilde \tb \, n}{[\log(2+ n)]^{(l-1)/l}} , \qquad \forall n \geq 1 . 
\end{equation}
By \eqref{en.est8} and \eqref{en.est6}, we get that 
\begin{equation}
C_1 \leq  \frac{E_n}{[\log(2+ t_{2n})]^2} \leq C_2  \ , \qquad \forall n \geq 0 \ ,
\end{equation}
which implies \eqref{mech.en2}.
\end{proof}

\begin{proof}[Proof of Proposition \ref{thm:mech}]
Let $ y(t)$ be the solution of \eqref{mech.a} with initial datum $(y(0), \dot{ y}(0)) = (1,1) $.
Define   $\beta(t)$  as
\begin{equation}
\label{beta.def}
\beta(t) := f\left(y(t), \dot{y}(t)\right) \ .
\end{equation}
Then $q(t):=( y(t), 0, \ldots, 0)$, $\dot q(t):=( \dot y(t), 0, \ldots, 0)$
is the solution of 
\eqref{mech} with initial datum 
$q(0) = (1, 0, \ldots, 0)$ and $\dot q(0) = (1, 0, \ldots, 0)$.
By Proposition \ref{prop:e}, the energy along $q(t)$ increases, and \eqref{mech.en2} holds (remark that  $p(t) = \dot q(t)$).
To prove   \eqref{beta} it is enough to use Fa\`a di Bruno's formula and the fact that  $|y(t)| + |\dot y(t)| \to \infty$ as $t \to \infty$.
\end{proof}

\begin{remark}
Combining \eqref{esty} and   \eqref{beta.def}, one sees easily that $\beta(t) \to 0$ as $t \to \infty$. 
If instead $t \mapsto \beta(t)$ is periodic, it is known that  all the solutions of $\ddot q_1 + q_1^{2l-1} = \beta(t)$  are bounded in time  \cite{Morris,  DZ1, lale91,  levi91, Yuan98}. Remark that, for autonomous system,  the phenomenon of having all solutions bounded is very interesting and  often associated to some sort of integrability, for example as it happens in the defocusing cubic NLS  on $\T$ or the Toda lattice (see e.g. \cite{MasperoVey, bama}).
\end{remark}

Finally we need to  estimate the norm of $F_t$, 
which in this case is  defined  as the flow  of the linearized Hamiltonian \eqref{clh} 
 along the solution $(q(t), p(t))$ of Proposition \eqref{thm:mech}. 
By \eqref{Ft},   $F_t$ solves the equation 
\begin{equation}
 \label{Ft.a}
 \dot F_t = \begin{pmatrix}
0_d & -\Upsilon(t) \\
I_d & 0_d 
\end{pmatrix} F_t  \ , \qquad F_0 = \uno
 \end{equation}
where $I_d$ is the $d\times d$ identity matrix, 
$0_d$ the $d\times d$ zero matrix, 
and $\Upsilon(t)$ the $d\times d$ diagonal matrix defined by
$$ \Upsilon(t) := \diag \Big( (2l-2)(q_1(t))^{2l -2}, 0, \ldots, 0 \Big)  $$
where  $q_1(t) \equiv  y(t)$ is first component of the unbounded solution constructed in Proposition \ref{thm:mech}.
\begin{lemma}
\label{Ft.norm}
Consider equation \eqref{Ft.a}. There exists $\tc >0$ such that,   for all $T >0$, one has 
\begin{align}
\label{est.F}
 \sup_{0\leq t \leq T}  |F_t| \leq   \exp\Big( \tc T \left[\log(2+T)\right]^\varsigma\Big), \qquad \varsigma:= 2\left(1-\frac{1}{l}\right) .
\end{align}
\end{lemma}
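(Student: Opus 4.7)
The plan is to apply Gronwall's inequality to the linear system \eqref{Ft.a} and then use the energy bound \eqref{mech.en2} from Proposition \ref{thm:mech} to control the size of the coefficient matrix.

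First I would note that $\dot F_t = A(t) F_t$ with $F_0 = \uno$ and
$$A(t) := \begin{pmatrix} 0_d & -\Upsilon(t) \\ I_d & 0_d \end{pmatrix},$$
so by the standard linear-ODE estimate one has, for every $0 \leq t \leq T$,
$$|F_t| \leq \exp\!\left(\int_0^t |A(s)|\,\di s\right) \leq \exp\!\left(\int_0^T \bigl(1 + (2l-2)|q_1(s)|^{2l-2}\bigr)\,\di s\right),$$
after a trivial bound on the operator norm of $A(s)$ in terms of the entries of $\Upsilon(s)$.

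Next I would use that along the unbounded solution built in Proposition \ref{thm:mech} the energy satisfies $E(q(s), p(s)) \leq C_2 \,[\log(2+s)]^2$ for all $s \geq 0$. Since $E(q,p) \geq \frac{1}{2l} q_1^{2l}$, this gives
$$|q_1(s)|^{2l-2} \leq C\, \bigl[E(q(s),p(s))\bigr]^{(l-1)/l} \leq C'\,[\log(2+s)]^{2(l-1)/l} = C'\,[\log(2+s)]^{\varsigma}.$$

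Finally, since $s \mapsto [\log(2+s)]^\varsigma$ is monotone increasing (note $\varsigma \geq 1 > 0$ because $l \geq 2$), the integral is trivially bounded by $T [\log(2+T)]^\varsigma$, so
$$\int_0^T |A(s)|\,\di s \leq T + C''\, T\,[\log(2+T)]^\varsigma \leq \tc\, T\,[\log(2+T)]^\varsigma,$$
for a suitable constant $\tc$, uniformly in $T \geq 0$. Inserting this into the Gronwall estimate yields \eqref{est.F}.

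There is no serious obstacle here: the proof is essentially a Gronwall argument combined with the already-established logarithmic growth of the classical energy. The only subtlety is tracking the dependence on $l$ in the exponent via $|q_1|^{2l-2} \lesssim E^{(l-1)/l}$, which is exactly what produces the exponent $\varsigma = 2(1 - 1/l)$.
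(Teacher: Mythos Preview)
Your proof is correct and follows essentially the same route as the paper: a Gronwall/linear-ODE estimate on $F_t$ combined with the logarithmic energy bound \eqref{mech.en2} to control $|q_1(s)|^{2l-2}$, which is exactly how the exponent $\varsigma = 2(1-1/l)$ arises. The paper's argument is slightly terser (it bounds $|\Upsilon(t)|$ directly rather than passing through $E \geq \tfrac{1}{2l}q_1^{2l}$), but the content is the same.
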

\begin{proof}
Using the results of  Proposition \ref{thm:mech}, one gets
$$
|\Upsilon(t)| \leq  C_2 \,  (2l-2)  \left[\log(2+t)\right]^\varsigma ,  \quad \forall t \geq 1 , 
$$
therefore 
 $$
 |F_t| \leq \exp\left( \int_0^t (1 + |\Upsilon(s)|) \di s \right)
 \leq \exp\left( \tc t  \left[\log(2+t)\right]^\varsigma\right) ,
 $$
 which gives the thesis.
\end{proof}


\subsection{Growth of Sobolev norms}
In this section we apply the semiclassical approximation to the quantum Hamiltonian \eqref{sls}. 
The idea is that the coherent state stays localized in the phase space around  the solution $(q(t), p(t))$  of  \eqref{clh}, and therefore  oscillates more and more, increasing its Sobolev norms.
\begin{lemma}
\label{u2}
Let  $z_t:= (q(t), p(t))$ be the unbounded solution of Proposition \ref{thm:mech}, and denote by  $z_0 := (q(0), p(0))$ its initial datum.  
Fix an arbitrary $r \in \N$ and $0<\epsilon< 1$. Then there exist $\kappa, \h_0, \tC_1, \tC_2 >0$ such that $\forall \h \in (0, \h_0]$, one has 
\begin{equation}
\label{g}
\norm{\cU_2^\h(t,0)\vf_{z_0}}_{r} \geq  \tC_1 \left[\log (2+ t)\right]^r
\end{equation}
for all times
\begin{equation}
\label{}
2 \leq t \leq \tC_2 \left[\log \left( \frac{\kappa}{\sqrt \h}\right) \right]^{1-\epsilon} . 
\end{equation}
\end{lemma}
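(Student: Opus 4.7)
The plan is to evaluate
\begin{equation*}
\|\cU_2^\h(t,0)\vf_{z_0}\|_r^2 \;=\; \bigl\langle H_l^r\,\cU_2^\h(t,0)\vf_{z_0},\ \cU_2^\h(t,0)\vf_{z_0}\bigr\rangle
\end{equation*}
by applying Proposition \ref{prop:gr} to the Weyl symbol of $H_l^r$. Since $r\in\N$, the semiclassical symbolic calculus gives $H_l^r=\Op{a_r^\h}$ with $a_r^\h\in \Sigma_u^{2lr/(l+1)}$ whose principal part is $a_r^0(x,\xi):=(1+|\xi|^2+W_l(x))^r$, the $\h$-corrections being of lower order. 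A direct differentiation, using $\partial_{x_j}W_l=x_j^{2l-1}$ together with $|x_j^{2l-1}|\leq C(1+|x|^{2l})^{(2l-1)/(2l)}$, shows $a_r^0$ satisfies the derivative-gain assumption \eqref{a.ass} with $\rho=2lr/(l+1)$; moreover $a_r^0(x,\xi)$ is comparable to $(1+E(x,\xi))^r$ up to constants.

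Applying Proposition \ref{prop:gr} to $a=a_r^\h$ along the unbounded orbit $z_t$ of Proposition \ref{thm:mech}, provided $\sqrt{\h}|F|_T\leq \kappa$, yields
\begin{equation*}
\|\cU_2^\h(t,0)\vf_{z_0}\|_r^2 \;=\; a_r^\h(z_t)+b(\h,t),\qquad |b(\h,t)|\leq \Gamma_1\,\h^{1/2}|F_t|\,(1+\cE_t)^{r-1/(2l)}.
\end{equation*}
Proposition \ref{thm:mech} bounds the main term from below: $E(z_t)\geq C_1[\log(2+t)]^2$ gives $a_r^0(z_t)\geq c\,[\log(2+t)]^{2r}$ for $t\geq 2$, and the $O(\h^2)$ corrections in $a_r^\h$ are negligible for small $\h$. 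For the remainder, combining Lemma \ref{Ft.norm} with Proposition \ref{thm:mech} gives
\begin{equation*}
|b(\h,t)|\;\leq\; C\,\h^{1/2}\exp\bigl(\tc\,t\,[\log(2+t)]^\varsigma\bigr)\,[\log(2+t)]^{2r-1/l}.
\end{equation*}

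The heart of the argument is then to calibrate $T$ so that both the applicability condition $\sqrt{\h}|F|_T\leq \kappa$ and the smallness $|b(\h,t)|\ll a_r^0(z_t)$ hold simultaneously. Both reduce to $\tc\,t\,[\log(2+t)]^\varsigma\leq \tfrac12\log(\kappa^2/\h)-O(\log\log(1/\h))$, and for $2\leq t\leq \tC_2[\log(\kappa/\sqrt{\h})]^{1-\epsilon}$ one has $\log(2+t)\sim \log\log(1/\h)$, hence $t[\log(2+t)]^\varsigma\sim[\log(1/\h)]^{1-\epsilon}[\log\log(1/\h)]^\varsigma=o(\log(1/\h))$, so the inequality holds for all $\h\in(0,\h_0]$ with $\h_0$ small. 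Taking square roots gives \eqref{g}.

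The main obstacle, in my view, is the preliminary verification that the symbol of $H_l^r$ satisfies the derivative gain \eqref{a.ass}: this is precisely what keeps the coherent-state remainder of size $\h^{1/2}|F_t|(1+\cE_t)^{r-1/(2l)}$ rather than $(1+\cE_t)^r$, producing the polynomial gap $[\log(2+t)]^{1/l}$ needed to absorb the exponential factor $e^{\tc\,t[\log(2+t)]^\varsigma}$ from $|F_t|$ throughout the time range of \eqref{g}. Everything else is a quantitative combination of Propositions \ref{thm:mech} and \ref{prop:gr} with Lemma \ref{Ft.norm}.
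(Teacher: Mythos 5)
Your overall architecture is the same as the paper's: apply Proposition~\ref{prop:gr} along the unbounded orbit of Proposition~\ref{thm:mech}, bound the remainder using the $\h^{1/2}|F_t|$ factor and the polynomial gap in $(1+\cE_t)$, and calibrate $T$ against Lemma~\ref{Ft.norm} so that $\sqrt{\h}|F|_T\leq\kappa$; your time-constraint analysis, in particular the observation that $t[\log(2+t)]^\varsigma = o(\log(1/\h))$ on the range $t\lesssim[\log(1/\h)]^{1-\epsilon}$, is correct and matches the paper. The genuine departure, and the source of the gap, is your choice of observable. You try to evaluate $\|\cU_2^\h\vf_{z_0}\|_r^2 = \langle H_l^r\,\cU_2^\h\vf_{z_0},\cU_2^\h\vf_{z_0}\rangle$ directly via the Weyl symbol $a_r^\h$ of $H_l^r$. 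The paper instead quantizes the clean, $\h$-independent symbol $\cE_r(x,\xi)=E(x,\xi)^r$, which visibly lies in $\Sigma^{2lr/(l+1)}$ and satisfies~\eqref{a.ass} by the same derivative computation you perform for the ``principal part,'' and then converts the scalar-product lower bound into a norm lower bound via the one-sided estimate $\langle\Op{\cE_r}\psi,\psi\rangle\leq C'^2\|\psi\|_r^2$ coming from~\eqref{CV2}. This sidesteps the symbol of $H_l^r$ entirely.

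The issues with your route: (i) Proposition~\ref{prop:gr} is stated for an $\h$-independent symbol $a\in\Sigma^\rho$, while $a_r^\h\in\Sigma_u^{2lr/(l+1)}$ genuinely depends on $\h$; you would need to re-prove it with uniform-in-$\h$ constants, and this is not free because the main-term identity $\langle\Op{a^\h}\cU_2\vf,\cU_2\vf\rangle=a^\h(z_t)+b(\h,t)$ then has an $\h$-dependent main term $a_r^\h(z_t)$, for which you have no $\h$-uniform lower bound without further argument. (ii) The paper's symbolic calculus theorem in this class delivers only $a_r^\h\in\Sigma_u^{2lr/(l+1)}$; it gives no asymptotic expansion $a_r^\h = a_r^0 + O(\h)$ with a uniform error, because symbols in $\Sigma^m$ are not assumed to gain decay under differentiation, so the usual Moyal hierarchy is not available here. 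Your statement that ``the $\h$-corrections are of lower order'' and ``negligible'' therefore has no support in the framework as given, and your verification of~\eqref{a.ass} is carried out only for $a_r^0$, not for $a_r^\h$. These are exactly the complications the paper's choice of $\cE_r$ is designed to avoid; with $\cE_r$ in place of $a_r^\h$ and estimate~\eqref{est1}, the rest of your argument goes through.
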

\begin{proof}
The result is an application of Proposition \ref{prop:gr}, which requires   the condition 
$\sqrt \h |F|_T \leq \kappa$ 
to be  fulfilled. 
Having fixed  $\kappa, \h_0 >0$ sufficiently small (to be specified later),  
and estimating $|F|_T$ by  
 Lemma \ref{Ft.norm},  we obtain that $T$ is constrained by the condition 
\begin{equation}
\label{T}
0 \leq T \leq \tC_2 \left[\log \left( \frac{\kappa}{\sqrt \h_0}\right) \right]^{1-\epsilon} , 
\end{equation}
where $\epsilon >0$ is an  arbitrarily small number and $\tC_2 \equiv \tC_2(\epsilon) >0$.
Define 
$$
\cE_r(x,\xi):= (E(x,\xi))^r \equiv  \Big(\frac{|\xi|^2}{2} + W_l(x) \Big)^r .
$$
The function $\cE_r$   is a symbol in $\Sigma^{2lr/(l+1)}$ fulfilling \eqref{a.ass}; moreover estimate  \eqref{CV2} implies that 
\begin{equation}
\label{est1}
\norm{\cE_r(x, \h D_x)^{1/2} \psi }_0 \leq C' \norm{\psi}_r , \quad \forall \psi \in \cH^r . 
\end{equation}
By Proposition \ref{prop:gr}  we have,  for every $t \in [0,T]$, the equality
\begin{equation}
\label{est2}
\la \cE_r(x, \h D_x)\, \cU_2^\h(t,0)\vf_{z_0}, \, \cU_2^\h(t,0)\vf_{z_0} \ra =  \cE_r(q(t), p(t)) + {b(\h, t)} ,
\end{equation}
where   $b(\h, t)$ fulfills,   by  \eqref{R} and \eqref{mech.en2}
\begin{equation}
\label{est4}
\abs{b(\h, t)} \leq  \Gamma_1  \h^\frac12  \, |F_t|  \left( 1+ E(z_t) \right)^{r - \frac{1}{2l}} \leq \Gamma_1 C_2^{r - \frac{1}{2l}} \kappa \Big[\log(2 + t)\Big]^{2r - \frac{1}{l}} ,  \qquad
\forall (\h, t)  \in (0, \h_0]\times [0,T] .
\end{equation}
The function $\cE_r(q(t), p(t))$ grows in time at a logarithmic speed; indeed by   Proposition \ref{thm:mech} 
\begin{equation}
\label{est3}
\cE_r(q(t), p(t))\geq C_1 \Big[\log(2 + t) \Big]^{2r}.
\end{equation}
Therefore collecting estimates  \eqref{est1}--\eqref{est4}
we obtain
\begin{align*}
\norm{\cU_2^\h(t,0)\vf_{z_0}}_r^2 & \geq \frac{1 }{C'^2} \left(C_1 \Big[\log(2+t)\Big]^{2r} - \Gamma_1 C_2^{r - \frac{1}{2l}}  \kappa  \Big[\log(2+t)\Big]^{2r-\frac1l} \right) \geq  \frac{C_1}{2 C'^2} \Big[ \log(2+t)\Big]^{2r} ,
\end{align*}
for all time $t$ provided 
$$
\te(\kappa)\leq t \leq T , \qquad \te(\kappa):=  \exp\left[\left(\frac{ 2\Gamma_1 C_2^{r - \frac{1}{2l}}  \kappa}{C_1}\right)^l\right] . 
$$
Now fix $\kappa>0$ so small  that $ \te(\kappa) \leq 2$, and $\h_0$ small enough so that $2$ is smaller than the r.h.s. of \eqref{T}.
\end{proof}

We can finally prove Theorem \ref{thm:main}.
\begin{proof}[Proof of Theorem \ref{thm:main}]
The result  is an application of  Theorem \ref{thm:rob} to system \ref{sls}. Assumption (H$_{\rm cl}$)  is trivially verified;
to show that  (H$_{\rm qu}$) holds note that by \eqref{CV2}
$$
\sup_{\substack{\h \in (0,1] \\ t \in \R}} \frac1\h \norm{[\beta(t) x_1, H_l] \psi }_{r} \leq C \norm{H_l^{\frac{1}{2}} \psi}_r  , \qquad \forall \psi \in \cH^{r+ 1} .
$$
Therefore condition \eqref{estT2} holds with $\tau = \frac12$ and Theorem \ref{thm:maro} implies that 
$$
\sup_{\h \in (0,1]} \norm{\cU^\h(t,s)}_{\cL(\cH^r)} \leq C_r' \la t-s \ra^{r} ;
$$
in particular condition \eqref{gsn} holds with $\mu = 1$.

Thus, by Theorem \ref{thm:rob}, Lemma \ref{u2} (with $\epsilon = 1/2$)  and using also \eqref{mech.en2},\eqref{est.F}, one finds constants $K_1, K_2, K_3>0$ such that 
\begin{align*}
\norm{\cU^\h(t,0)\vf_{z_0}}_r &\geq 
\norm{\cU^\h_2(t,0)\vf_{z_0}}_r - \norm{\cU^\h(t,0)\vf_{z_0} - \cU^\h_2(t,0)\vf_{z_0}}_r  \geq   
K_1 [\log(2+t)  ]^r
\end{align*}
for all times 
$$
2 \leq t \leq K_2 \left[\log\left( \frac{K_3}{\h}\right) \right]^{\frac12} . 
$$
\end{proof}

\appendix

\section{A semiclassical abstract theorem on growth of Sobolev norms}
\label{appA}

We prove here  a semiclassical version of Thereom  1.5 of \cite{maro}. Thus consider an Hilbert space $\cH^0$ and a positive, invertible, selfadjoint operator $K^\h$ (possibly $\h$-dependent) acting on it. 
Define the scale of spaces $\cH^r := D\left((K^\h)^{r}\right)$, endowed with the norm $\norm{\psi}_r \equiv \norm{(K^\h)^{r} \psi}_{\cH^0}$.
Note that the norms might depend on $\h$ as well.
On $\cH^r$, consider the time dependent  Schr\"odinger equation 
\begin{equation}
\label{eqL}
\im \hbar \partial_t \psi(t) = L^\hbar(t) \, \psi(t) \ , \qquad \psi\vert_{t=s} = \psi_s \in \cH^r
\end{equation}
where $L^\hbar(t)$ is a selfadjoint operator in $C^0\Big([0,T],\cL(\cH^{r+m}, \cH^{r})\Big)$, $m \in \R$.

\begin{theorem}
\label{thm:maro}
Assume that there exists  $\tau <1$  
such that the following holds true: $\forall r \geq 0$, there exists $C_r >0$  such that 
\begin{equation}
\label{estT}
\sup_{\hbar \in (0,1]} \frac{1}{\h} \, \norm{ \left[L^\hbar(t), K^\hbar\right] \, (K^\hbar)^{-\tau} }_{\cL(\cH^r)} \leq \,  C_r   , \quad  \forall t \in [0,T].
\end{equation}
Then equation \eqref{eqL} has a unique propagator $\cU^\h(t,s)$, $\forall t,s \in [0,T]$, unitary in $\cH^0$ which restricts to a bounded operator from $\cH^r $ to itself fulfilling
\begin{equation}
\label{estT2}
\sup_{\h \in (0,1]} \norm{\cU^\h(t,s)}_{\cL(\cH^r)} \leq C_r' \la t-s \ra^{\frac{r}{2(1-\tau)}} . 
\end{equation}
\end{theorem}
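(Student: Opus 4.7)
The strategy is to derive and integrate a nonlinear differential inequality for $y(t) := \norm{\cU^\h(t,s)\psi_s}_r^2$. Before this, existence and $\cH^0$-unitarity of the propagator would follow from the Kato-type theory for time-dependent linear evolutions: hypothesis \eqref{estT} together with the assumed regularity $L^\h(t) \in C^0([0,T], \cL(\cH^{r+m}, \cH^r))$ and self-adjointness on $\cH^0$ are exactly what is needed. Boundedness on each $\cH^r$ then follows from the quantitative estimate derived below.

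\textbf{Key identity and commutator bound.} Set $\psi(t) := \cU^\h(t,s)\psi_s$. Differentiating $y(t) = \langle (K^\h)^{2r}\psi, \psi\rangle$ along the flow and using self-adjointness of $L^\h(t)$ to cancel the diagonal term yields
\begin{equation*}
\dot y(t) = \tfrac{1}{i\h}\langle [(K^\h)^{2r}, L^\h(t)]\psi, \psi\rangle.
\end{equation*}
I would split symmetrically $[(K^\h)^{2r}, L^\h] = (K^\h)^r[(K^\h)^r, L^\h] + [(K^\h)^r, L^\h](K^\h)^r$; combined with the anti-self-adjointness of $[(K^\h)^r, L^\h]$ and Cauchy--Schwarz, this gives
\begin{equation*}
|\dot y(t)| \leq \tfrac{2}{\h}\norm{[(K^\h)^r, L^\h(t)]\psi}_0 \norm{\psi}_r.
\end{equation*}
Expanding $[(K^\h)^r, L^\h] = \sum_{k=0}^{r-1}(K^\h)^{r-1-k}[K^\h, L^\h](K^\h)^k$ (Leibniz) and invoking \eqref{estT} at each level $r-1-k$ produces $\norm{[(K^\h)^r, L^\h]\psi}_0 \leq C_r\h \norm{\psi}_{r-1+\tau}$, whence
\begin{equation*}
|\dot y(t)| \leq C_r\, \norm{\psi}_{r-1+\tau}\, \norm{\psi}_r.
\end{equation*}

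\textbf{Log-convexity and nonlinear Gronwall.} The map $s \mapsto \log\norm{\psi}_s$ is convex on $[0,\infty)$: for $a \geq b \geq 0$, $\norm{\psi}_a^2 = \langle (K^\h)^{a-b}\psi, (K^\h)^{a+b}\psi\rangle \leq \norm{\psi}_{a-b}\norm{\psi}_{a+b}$ by Cauchy--Schwarz and self-adjointness of the powers of $K^\h$. Applied at $a = r-1+\tau \in [0,r]$ (since $\tau \in [0,1)$), this gives $\norm{\psi}_{r-1+\tau} \leq \norm{\psi}_0^{(1-\tau)/r}\norm{\psi}_r^{(r-1+\tau)/r}$. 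Unitarity on $\cH^0$ conserves $\norm{\psi(t)}_0 = \norm{\psi_s}_0$, so the inequality becomes a closed Bernoulli-type ODE in $y$. The substitution $\tilde y := y^{(1-\tau)/(2r)}$ linearizes it to $\dot{\tilde y} \leq C'_r \norm{\psi_s}_0^{(1-\tau)/r}$; integrating and inverting produces the claimed polynomial bound \eqref{estT2} for integer $r$, while non-integer $r$ follows by complex interpolation between adjacent integer levels with constants uniform in $\h$.

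\textbf{Main obstacle.} The delicate step is the commutator iteration: the Leibniz expansion must be organized so that the single $\h^{-1}$ from the Schr\"odinger equation is exactly absorbed by the $\h$ produced at each application of \eqref{estT}, with the final bound involving the $\tau$-shifted norm $\norm{\psi}_{r-1+\tau}$ rather than $\norm{\psi}_r$ (a bound involving the latter would only yield exponential growth in $t$). A secondary concern is to guarantee that the propagator extends to the full Sobolev scale and that the constants in the nonlinear Gronwall step remain uniform in $\h \in (0,1]$, so as to recover the sharp exponent $r/(2(1-\tau))$.
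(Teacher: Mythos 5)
Your approach is genuinely different from the paper's. The paper writes a Duhamel-type identity for $[\cU^\h(t,s)]^*(K^\h)^k\cU^\h(t,s)$, deduces the one-step bound
$\Vert\cU^\h(t,s)\psi_s\Vert_{k}\leq\Vert\psi_s\Vert_{k}+C_k'\int_s^t\Vert\cU^\h(t_1,s)\psi_s\Vert_{k-\theta}\,\di t_1$ with $\theta=1-\tau$, and iterates this $m$ times until $m\theta\geq k$, at which point the lowest index drops to $0$ and the conserved $\cH^0$ norm closes the estimate; the exponent comes out as the number of iterations $m$. You instead differentiate $y(t)=\Vert\psi(t)\Vert_r^2$, use the commutator hypothesis to bound $\dot y$ in terms of the interpolated norm $\Vert\psi\Vert_{r-1+\tau}$, invoke log-convexity of the scale, use $\cH^0$-conservation, and integrate the resulting Bernoulli inequality. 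The two proofs buy different things: the paper's iterative scheme avoids interpolation altogether at integer levels (it only needs it to pass from integer $k$ to general $r$), while your differential-inequality route is cleaner and shorter, at the cost of needing both the log-convexity inequality and a differentiability-in-$\cH^{2r}$ justification (which must anyway be supplied by a regularization, since it is exactly what is being proved; the paper sidesteps this by citing \cite{maro} for the existence of the propagator on the scale). Both are acceptable proofs of the same inequality.

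One concrete discrepancy to flag, however, which is present both in your argument and in the paper's: your final step yields $\Vert\psi(t)\Vert_r=\tilde y^{r/(1-\tau)}\lesssim\la t-s\ra^{r/(1-\tau)}$, i.e.\ exponent $r/(1-\tau)$, not the $r/(2(1-\tau))$ you claim to have obtained and which appears in \eqref{estT2}. The paper's own iteration likewise terminates after $m=k/(1-\tau)$ steps and hence also produces the exponent $k/(1-\tau)$ for $\Vert\cU^\h\Vert_{\cL(\cH^k)}$. The source of the mismatch is the norm convention stated at the start of the appendix, $\Vert\psi\Vert_r:=\Vert(K^\h)^{r}\psi\Vert_{\cH^0}$; if one instead takes $\Vert\psi\Vert_r:=\Vert(K^\h)^{r/2}\psi\Vert_{\cH^0}$ (which is the convention used in the main body of the paper for $H_l$, see \eqref{norm}, and the one under which the application in the proof of Theorem~\ref{thm:main} with $\tau=1/2$, $\mu=1$ is coherent), then the iteration stops after $m=r/(2(1-\tau))$ steps and the stated exponent is recovered. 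In your argument the analogous correction is to set $y(t)=\la(K^\h)^r\psi(t),\psi(t)\ra$ rather than $\la(K^\h)^{2r}\psi(t),\psi(t)\ra$; the rest of your computation then produces \eqref{estT2} exactly. So the arithmetic gap you see is a typo in the appendix's norm definition, not a flaw in your method.
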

This result is proved in \cite{maro} for $\h =1$; here we prove its extension to the semiclassical case. 

\begin{proof}
The existence of the propagator, its unitarity in $\cH^0$ and the group property follows from   Theorem 1.5 of \cite{maro}. To prove \eqref{estT2} we revisit the proof of that theorem.
First by induction one verifies that  $\forall k \in \N$ 
\begin{equation}
\label{bb}
\sup_{\h \in (0,1]} \frac{1}{\h} \norm{\left[L^\h(t), (K^\h)^k\right] \, (K^\h)^{-(k-1+\tau)}}_{\cL(\cH^0)} \leq  C_k' .
\end{equation}
(see e.g. \cite[Lemma 2.1]{maro}).
Now remark that  $\cU^\h(t,s)$ is an isometry in $\cH^0$, so $\Vert\cU^\h(t,s)\psi_s\Vert_{k} = \Vert[\cU^\h(t,s)]^*\, (K^\h)^k \, \cU^\h(t,s)\psi_s\Vert_0.$
 But  we have
 $$
[\cU^\h(t,s)]^*\, (K^\h)^k \, \cU^\h(t,s)\psi_s = (K^\h)^k \psi_s + \frac{1}{\im \h}\int_s^t [\cU^\h(t_1,s)]^*\, [L^\h(t_1), (K^\h)^k]\, \cU^\h(t_1,s)\psi_s \, \di t_1
 $$
 Hence using \eqref{bb} we get the first estimate
\begin{equation}\label{step1}
 \Vert\cU^\h(t,s)\psi_s\Vert_{k} \leq  \Vert\psi_s\Vert_{k} + C_k'\int_s^t  \,  \Vert\cU^\h(t_1,s)\psi_s\Vert_{k-\theta}\di t_1,\quad \theta=1-\tau.
\end{equation}
After $m$  iterations of (\ref{step1}), with other constants $C_{k,m}$,   we get that $\Vert\cU^\h(t,s)\psi_s\Vert_{k} $ is bounded by 
\begin{align*}
C_{k,m} \sum_{j=0}^{m-1} \Vert\psi_s\Vert_{k - j\theta} 
\la t-s \ra^j  
+C_{k,m}\int_s^t\int_s^{t_1}\cdots \int_s^{t_{m-1}}\Vert\cU^\h(t_m,s)\psi_s\Vert_{k-m\theta} \ \di t_m \di t_{m-1}\cdots \di t_{1}.
\end{align*}
Now choose  $m$  so  that $m\theta \geq k$, thus $\Vert\cU^\h(t_m,s)\psi_s\Vert_{k-m\theta} \leq \Vert\cU^\h(t_m,s)\psi_s\Vert_{0} \leq \norm{\psi_s}_0$. 
If $\tau$ is rational, one can take $k$ sufficiently large so that $m = k/\theta$ is rational. Then one gets \eqref{estT2} with $r = k$. The general result follows from linear interpolation.
\end{proof}

\def\cprime{$'$}

\end{document}